\newtheorem{theorem}{Theorem}[section]
\newtheorem{proposition}[theorem]{Proposition}
\newtheorem{lemma}[theorem]{Lemma}
\newtheorem{corollary}[theorem]{Corollary}
\theoremstyle{definition}
\newtheorem{example}{Example}
\theoremstyle{remark}
\newtheorem{remark}[theorem]{Remark}
\newcommand{\C}{\mathbb{C}}
\newcommand{\Z}{\mathbb{Z}}
\newcommand{\N}{\mathbb{N}}
\newcommand{\R}{\mathbb{R}}
\newcommand{\T}{\mathbb{T}}
\begin{document}

\title{Almost strong mixing group actions in topological dynamics}

\author{Jane Wang}

\maketitle 

\begin{abstract}  In ergodic theory, given sufficient conditions on the system, every weak mixing $\mathbb{N}$-action is strong mixing along a density one subset of $\mathbb{N}$. We ask if a similar statement holds in topological dynamics with density one replaced with thickness. We show that given sufficient initial conditions, a group action in topological dynamics is strong mixing on a thick subset of the group if and only if the system is $k$-transitive for all $k$, and conclude that an analogue of this statement from ergodic theory holds in topological dynamics when dealing with abelian groups. 
\end{abstract}

\section{Background and Problem Statement} 
\label{sec:prelim}

Ergodic theory and topological dynamics are two branches of the larger subject of dynamical systems. The basic setup of a dynamical system in ergodic theory is to have $T$ a measure-preserving transformation acting on a probability space $(X, \mathcal{B}, \mu)$. If we want to work with more generality, we can replace the transformation $T$ with $\Gamma$, a measurable group action on $X$. 

In topological dynamics, we no longer require our underlying space to be a measure space, but instead to be a topological space. In this setup, our topological system is $(X, \Gamma)$, where $X$ is a compact, usually metrizable, space\footnote{The restriction that $X$ is compact and metrizable mirrors the restriction that $X$ is a probability space in the setting of ergodic theory. However, for the purposes of this paper, we will usually work in more generality and only assume that $X$ is second countable.} and $\Gamma$ a topological group acting continuously on $X$, so that the map $X \times \Gamma \rightarrow X$ is continuous. We will always implicitly make the assumption that both $X$ and $\Gamma$ are Hausdorff. Furthermore, we will generally take $\Gamma$ to also be locally compact, and second countable. 

Although ergodic theory and topological dynamics are different fields with their own theories, many concepts that are used to describe dynamical systems make an appearance in both fields (e.g. recurrence, periodicity, mixing, chaos). In addition, many of the results involving these ideas hold in both fields  (see \cite{GW}). 

For example, various notions of mixing exist in both the measurable and topological settings. In both ergodic theory and topological dynamics, there are notions of strong and weak mixing and in both theories, a well-known result is that strong mixing implies weak mixing. 

In ergodic theory, it is also known that given sufficient conditions on the system, a weak mixing system is ``almost" strong mixing, for some notion of ``almost" that we will make more precise in the next section. We can then ask if a similar statement can be made in the context of topological dynamics. This then leads us to the motivating question of this paper: when is a topological weak mixing system ``almost" strong mixing?

\subsection{Motivation from Ergodic Theory}

We begin by describing the known results in ergodic theory. In the setting of measurable dynamics, we say that a measure-preserving transformation $T$ on a probability space $(X, \mathcal{B}, \mu)$  is \textbf{weak mixing} if 
$$\lim_{n \rightarrow \infty} \frac{1}{n} \sum_{k=0}^{n-1} |\mu(TA^{-k} \cap  B) - \mu(A) \mu (B) | = 0$$ for all $A, B \in \mathcal{B}$. Furthermore, we say that a system is \textbf{strong mixing} if $$\lim_{n \rightarrow \infty} \mu(TA^{-n} \cap B) = \mu(A) \mu (B).$$
We can see that strong mixing implies weak mixing, but the following two theorems will give us that systems that are weak mixing ``look" strong mixing on a density one subset of $\N$, where $J \subset \N$ is said to be \textbf{density one} if $$\lim_{n \rightarrow \infty} \frac{\#\{J \cap [1,n]\}}{n} = 1.$$

These theorems and their proofs may be found in \cite{W}, Section 1.7. 

\begin{theorem} 
\label{thm:mdo1}
Let $T$ be a measure-preserving transformation on a probability space $(X, \mathcal{B}, \mu)$ that is weak mixing. Then, for any $A, B \in \mathcal{B}$, there exists a density one set $J \subset \N$ for which $$\lim_{n \in J, n \rightarrow \infty} \mu(T^{-n} A \cap B) = \mu(A) \mu(B).$$ 
\end{theorem}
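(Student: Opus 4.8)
The plan is to reduce the statement to a purely combinatorial fact about bounded sequences — the Koopman--von Neumann lemma — which converts Ces\`aro (average) convergence into convergence along a density one set. First I would fix $A, B \in \mathcal{B}$ and set $a_n = |\mu(T^{-n}A \cap B) - \mu(A)\mu(B)|$. This is a sequence of nonnegative reals bounded by $1$, and the hypothesis that $T$ is weak mixing says precisely that its Ces\`aro averages vanish:
$$\lim_{N \to \infty} \frac{1}{N} \sum_{n=1}^{N} a_n = 0.$$
The conclusion we want is exactly that $a_n \to 0$ along some density one set $J \subset \N$. Thus the theorem is an instance of the equivalence, for bounded nonnegative sequences, between these two modes of convergence.

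The heart of the argument is therefore the lemma: if $(a_n)$ is a bounded nonnegative sequence whose Ces\`aro averages tend to $0$, then there is a density one set $J$ with $\lim_{n \in J} a_n = 0$. I would prove the nontrivial direction as follows. For each $k \geq 1$, consider the level set $E_k = \{n \in \N : a_n > 1/k\}$. A one-line estimate shows each $E_k$ has density zero, since
$$\frac{1}{k}\, \#\big(E_k \cap [1,N]\big) \leq \sum_{n=1}^{N} a_n,$$
so that $\#(E_k \cap [1,N])/N \leq k \cdot \frac{1}{N}\sum_{n \leq N} a_n \to 0$. These sets are nested, $E_1 \subseteq E_2 \subseteq \cdots$, because the thresholds $1/k$ decrease.

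Next I would patch the $E_k$ together by a diagonal argument. Using that each $E_k$ has density zero, choose an increasing sequence $N_1 < N_2 < \cdots$ so that $\#(E_k \cap [1,N])/N < 1/k$ for all $N \geq N_k$, and set
$$E = \bigcup_{k=1}^{\infty} \big(E_k \cap [N_k, N_{k+1})\big).$$
For $N \in [N_m, N_{m+1})$, any $n \in E \cap [1,N]$ lies in some block with $k \leq m$, so by nestedness $n \in E_m$; hence $\#(E \cap [1,N])/N < 1/m \to 0$, giving that $E$ has density zero and $J = \N \setminus E$ has density one. Finally, if $n \in J$ lies in $[N_k, N_{k+1})$ then $n \notin E_k$, i.e.\ $a_n \leq 1/k$; as $n \to \infty$ along $J$ the index $k$ tends to infinity, so $a_n \to 0$. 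Applying this with $a_n = |\mu(T^{-n}A \cap B) - \mu(A)\mu(B)|$ yields the theorem.

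The main obstacle is this forward direction of the lemma, and within it the patching step: no single $E_k$ is the exceptional set we want, since each only controls $a_n$ down to the level $1/k$, so the delicate point is to interleave them along a suitably sparse sequence of cutoffs $N_k$ while keeping the union of density zero. The nestedness of the $E_k$ is what makes the density bound clean; without it one would have to sum the contributions of the individual blocks and argue more carefully.
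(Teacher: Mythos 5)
Your proof is correct, and it is essentially the same argument as the one the paper relies on: the paper does not prove this theorem itself but cites Walters, Section 1.7, where the proof is exactly this reduction to the Koopman--von Neumann lemma (bounded nonnegative sequences have vanishing Ces\`aro averages if and only if they converge to zero along a density one set), proved via the level sets $E_k = \{n : a_n > 1/k\}$ and the same diagonal patching along cutoffs $N_k$. Your handling of the density-zero union and the convergence along the complement is sound, so there is nothing to fix.
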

We can interpret this theorem as saying that $T$ is strong mixing along $J$. We notice here that in the above theorem, the density one set $J$ may depend on $A$ and $B$. However, under the extra condition that there exists countable basis for $(X, \mathcal{B}, \mu)$, we can remove the dependence on the sets $A$ and $B$: 

\begin{theorem}
\label{thm:mdo2}
Let $T$ be a measure-preserving transformation on a probability space $(X, \mathcal{B}, \mu)$ with a countable basis. Then, $T$ is weak mixing if and only if there exists of density one set $J \subset \N$ such that for all $A, B \in \mathcal{B}$, $$\lim_{n \in J, n \rightarrow \infty} \mu(T^{-n} A \cap B) = \mu(A) \mu(B).$$
\end{theorem}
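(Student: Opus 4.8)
The plan is to prove both implications, obtaining the reverse direction from a standard fact relating Cesàro convergence to convergence along a density one set, and the forward direction from Theorem \ref{thm:mdo1} together with a diagonalization over a countable dense family. Throughout I will use the elementary lemma that for a bounded sequence of nonnegative reals $(a_n)$, one has $\frac{1}{N}\sum_{n=1}^{N} a_n \to 0$ if and only if there is a set $J \subset \N$ of density one with $\lim_{n \in J, n \to \infty} a_n = 0$. For the reverse implication, suppose such a density one set $J$ exists. Fixing $A, B \in \mathcal{B}$ and setting $a_n = |\mu(T^{-n}A \cap B) - \mu(A)\mu(B)|$, we have $0 \le a_n \le 1$ and $a_n \to 0$ along $J$, so by the lemma the Cesàro averages of $a_n$ tend to $0$; this is exactly the weak mixing condition.

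For the forward implication, let $\mathcal{D} = \{D_i\}_{i \in \N}$ be a countable family that is dense in the measure algebra, i.e. for every $B \in \mathcal{B}$ and $\epsilon > 0$ there is $D_i$ with $\mu(B \triangle D_i) < \epsilon$; this is what the countable basis hypothesis provides. For each ordered pair $(i,j)$ write $a_n^{(i,j)} = |\mu(T^{-n}D_i \cap D_j) - \mu(D_i)\mu(D_j)|$ and note $0 \le a_n^{(i,j)} \le 1$. The obstacle is that Theorem \ref{thm:mdo1} supplies a separate density one set for each pair, and a countable intersection of density one sets need not be density one. To get around this I would package all the pairs into the single bounded sequence
\[
b_n = \sum_{i,j \in \N} 2^{-(i+j)} a_n^{(i,j)}.
\]
Since each $a_n^{(i,j)} \le 1$, the tail of the series is uniformly small in $n$, so one may interchange the sum with the Cesàro average; weak mixing makes the Cesàro average of each $a_n^{(i,j)}$ vanish, whence the Cesàro average of $b_n$ tends to $0$. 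Applying the lemma to $(b_n)$ produces one density one set $J$ with $b_n \to 0$ along $J$, and because every term of the series is nonnegative this forces $a_n^{(i,j)} \to 0$ along $J$ simultaneously for all $i, j$.

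It then remains to upgrade from the dense family $\mathcal{D}$ to arbitrary $A, B \in \mathcal{B}$, for the same set $J$. Given $A, B$ and $\epsilon > 0$, choose $D_i, D_j$ with $\mu(A \triangle D_i) < \epsilon$ and $\mu(B \triangle D_j) < \epsilon$. Using that $T$ is measure preserving, so that $\mu(T^{-n}A \triangle T^{-n}D_i) = \mu(A \triangle D_i)$, together with the elementary bound $|\mu(E \cap F) - \mu(E' \cap F')| \le \mu(E \triangle E') + \mu(F \triangle F')$ and the corresponding estimate comparing $\mu(A)\mu(B)$ with $\mu(D_i)\mu(D_j)$, one bounds $|\mu(T^{-n}A \cap B) - \mu(A)\mu(B)|$ by $a_n^{(i,j)}$ plus a term of order $\epsilon$, uniformly in $n$. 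Since $a_n^{(i,j)} \to 0$ along $J$ and $\epsilon$ is arbitrary, the desired limit holds along $J$ for all $A, B$.

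The main technical point is the diagonalization step encapsulated in the sequence $b_n$, which converts the pair-dependent sets of Theorem \ref{thm:mdo1} into a single density one set; the approximation step is routine once the measure-preserving property is invoked to control the error uniformly in $n$.
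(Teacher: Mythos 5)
Your proof is correct. Note that the paper itself gives no proof of this theorem --- it defers to Walters, Section 1.7 --- so the comparison is with the classical argument in that reference. Your reverse implication and your approximation step are exactly the standard ones: the Koopman--von Neumann lemma relating Ces\`aro convergence of a bounded nonnegative sequence to convergence along a density one set, and the bound $|\mu(E \cap F) - \mu(E' \cap F')| \le \mu(E \triangle E') + \mu(F \triangle F')$ together with measure preservation to pass from the countable dense family to arbitrary $A, B \in \mathcal{B}$. Where you genuinely diverge is in how the countably many exceptional sets are merged into a single one. The textbook proof enumerates the pairs $(D_i, D_j)$, takes the density zero exceptional sets supplied pair by pair, and splices the increasing finite unions of them together along an increasing sequence of time intervals, verifying by hand that the spliced set still has density zero. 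Your weighted series $b_n = \sum_{i,j} 2^{-(i+j)} a_n^{(i,j)}$ sidesteps that construction: one application of the Koopman--von Neumann lemma to $(b_n)$ yields a single density one set $J$ that works for all pairs simultaneously, since $a_n^{(i,j)} \le 2^{i+j} b_n$, and the interchange of the sum with the Ces\`aro average is justified exactly as you say by the uniform smallness of the tails. This is arguably the cleaner route, at no extra cost, since both arguments rest on the same lemma. One small remark: you never actually need Theorem \ref{thm:mdo1} --- the definition of weak mixing already gives Ces\`aro convergence to zero for each fixed pair, which is all your argument uses; citing it as motivation for the obstacle is fine, but it plays no logical role.
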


Our goal is now to find a similar result in the context of topological dynamics. We will formalize the statement of the problem that we will be looking at in the next section.

\subsection{The Picture in Topological Dynamics} 

In topological dynamics, we generally begin with $X$ a compact, metrizable space and $\Gamma$ a locally compact, second countable topological group that acts on $X$. We say then that $(X, \Gamma)$ is \textbf{(topologically) weak mixing} if for any choice of four non-empty open sets $U_1, U_2, V_1, V_2 \subset X$, there exists a $\gamma \in \Gamma$ for which 
$$\gamma U_1 \cap V_1 \neq \emptyset \text{ and } \gamma U_2 \cap V_2 \neq \emptyset.$$
Furthermore, we say that $(X, \Gamma)$ is \textbf{(topologically) strong mixing} if for any two non-empty open sets $U, V \subset  X$, there exists a compact subset $F \subset \Gamma$ for which 
$$\gamma U \cap V \neq \emptyset \text{ for all } \gamma \in \Gamma \backslash F.$$ 
As in the measurable setting, we can see that if $\Gamma$ is not compact, then topological strong mixing implies topological weak mixing. If $\Gamma$ is compact, then $(X, \Gamma)$ is trivially strong mixing. Therefore, in this paper we will only consider the actions of noncompact groups $\Gamma$.

To find an analogues of Theorems \ref{thm:mdo1} and \ref{thm:mdo2}, we need a notion similar to density one in the context of general groups. Our candidate here is the notion of thickness. 

We say that a set $N \subset \Gamma$ is \textbf{thick} if for every finite subset $F \subset \Gamma$, there exists a $\gamma_0 \in N$ with $F^{-1} \gamma_0 \subset N$, which is equivalent to saying that $$N \cap \bigcap_{\gamma \in F} \gamma N \neq \emptyset.$$ As we will be often concerned with the set of $\gamma$'s that send one open set to another, we introduce the following notation.  For a topological system $(X, \Gamma)$ and sets $A, B \subset X$, we let $$N(A, B) := \{ \gamma \in \Gamma : \gamma A \cap B \neq \emptyset\}.$$ For $\Gamma$ an abelian, discrete, countable group, a result like Theorem \ref{thm:mdo1} exists in the topological setting. 

\begin{theorem}[\cite{G}, Theorem 1.11]
\label{thm:tdt}
For $\Gamma$ an abelian, discrete, countable group, and $X$ a compact space, the following conditions are equivalent: 
\begin{itemize}
\itemsep0em 
\item[(i)] $(X, \Gamma)$ is weakly mixing. 
\item[(ii)] For every four open sets $U_1, U_2, V_1, V_2 \subset X$ such that $N(U_1, V_1) \neq \emptyset$ and $N(U_2, V_2) \neq \emptyset$, there exist nonempty open sets $U, V$ for which $N(U,V) \subset N(U_1, V_1) \cap N(U_2, V_2)$. 
\item[(iii)] For every $k \in \N$, $(X, \Gamma)$ is $k$ transitive. That is, $(X^k, \Gamma)$ is topologically transitive. 
\item[(iv)] For every part $U, V$ of nonempty open sets, the set $N(U,V)$ is thick. 
\item[(v)] For every pair $U, V$ of nonempty open sets, there exists a $\gamma \in \Gamma$ for which $\gamma U \cap U \neq \emptyset$ and $\gamma U \cap V \neq \emptyset$. 
\end{itemize}
\end{theorem}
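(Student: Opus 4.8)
The plan is to run the equivalences as a cycle, after translating the two mixing notions into transitivity of products: condition (i) says exactly that $(X\times X,\Gamma)$ is topologically transitive, so it is the $k=2$ instance of (iii) and (iii)$\Rightarrow$(i) is immediate. Two bookkeeping facts will be used repeatedly: the monotonicity $N(U,V)\subseteq N(U',V')$ when $U\subseteq U'$ and $V\subseteq V'$, and the translation identity $\gamma N(U,V)=N(U,\gamma V)$. Commutativity of $\Gamma$ enters through a single lemma, which I would prove first: if $U\subseteq sU_1$ and $V\subseteq sV_1$ for one element $s\in\Gamma$, then $N(U,V)\subseteq N(U_1,V_1)$. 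Indeed, $\gamma U\cap V\neq\emptyset$ yields $\gamma sU_1\cap sV_1\neq\emptyset$, hence $s^{-1}\gamma s\,U_1\cap V_1\neq\emptyset$, and abelianness collapses $s^{-1}\gamma s$ to $\gamma$.

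Using this lemma I would establish (i)$\Rightarrow$(iii) through the intermediate statement (ii). For (i)$\Rightarrow$(ii), transitivity of $X\times X$ applied to the boxes $U_1\times V_1$ and $U_2\times V_2$ gives $t$ with $tU_1\cap U_2\neq\emptyset$ and $tV_1\cap V_2\neq\emptyset$; then $U:=U_1\cap t^{-1}U_2$ and $V:=V_1\cap t^{-1}V_2$ are nonempty open, and applying the lemma with $s=e$ and with $s=t^{-1}$ gives $N(U,V)\subseteq N(U_1,V_1)\cap N(U_2,V_2)$. Since (i) also forces $X$ transitive, every $N(U,V)$ is nonempty, so iterating (ii) over pairs shows that for any $U_1,V_1,\dots,U_k,V_k$ there is a nonempty $N(U^\ast,V^\ast)\subseteq\bigcap_i N(U_i,V_i)$; the intersection is therefore nonempty, which is exactly $k$-transitivity (iii).

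The two remaining forward steps are elementary given transitivity. For (iii)$\Rightarrow$(iv), thickness of $N(U,V)$ amounts to $N(U,V)\cap\bigcap_{i=1}^n\gamma_i N(U,V)\neq\emptyset$; rewriting $\gamma_i N(U,V)=N(U,\gamma_i V)$, this intersection is realized by a single instance of $(n{+}1)$-transitivity for the box $U^{n+1}$ against $V\times\gamma_1V\times\cdots\times\gamma_nV$. For (iv)$\Rightarrow$(v), thickness makes the system transitive, so I may pick $f\in N(U,V)$ and set $U_0:=U\cap f^{-1}V$ (nonempty, with $U_0\subseteq U$ and $fU_0\subseteq V$); thickness of $N(U_0,U_0)$ applied to the one-element set $\{f\}$ produces $\gamma\in N(U_0,U_0)\cap N(U_0,fU_0)$, whence $\gamma U\cap U\neq\emptyset$ and $\gamma U\cap V\neq\emptyset$, which is (v).

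The crux, and the step I expect to be the genuine obstacle, is closing the cycle (v)$\Rightarrow$(i): promoting the purely local return condition back to transitivity of $X\times X$. Condition (v) immediately gives transitivity of $X$, but upgrading this to a product statement is precisely Furstenberg's weak-mixing lemma. I would carry it out by showing that (v) forces the family $\{N(U,V)\}$ to be a filter base and then invoking the finite-intersection property as above; the delicate point is that two nonempty (even thick) return-time sets need not meet, so the argument must use (v) iteratively to \emph{align} a shrinking pair of open sets rather than merely intersect given ones, building $U,V$ with $N(U,V)\subseteq N(A,B)\cap N(C,D)$ directly. Finally I would record the one asymmetry in the statement: because (ii) only constrains pairs with nonempty return sets, the reverse implication (ii)$\Rightarrow$(i) needs the ambient transitivity furnished by any of (iii), (iv), (v); granting that, a nonempty $N(U,V)$ inside the containment of (ii) forces $N(U_1,V_1)\cap N(U_2,V_2)\neq\emptyset$, completing the equivalence.
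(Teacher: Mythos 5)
Your steps around most of the cycle are correct: the commutativity lemma ($U\subseteq sU_1$ and $V\subseteq sV_1$ imply $N(U,V)\subseteq N(U_1,V_1)$), the derivation (i)$\Rightarrow$(ii), the iteration of (ii) plus transitivity to get (iii), the thickness argument for (iii)$\Rightarrow$(iv) via $\gamma N(U,V)=N(U,\gamma V)$, and the argument for (iv)$\Rightarrow$(v) are all sound. So is your observation that (ii) alone does not return to (i): a trivial action satisfies (ii) vacuously, which is why the cited source states this theorem with topological transitivity as a standing hypothesis. (Note the paper itself gives no proof of this theorem --- it is quoted from Glasner --- so the only thing to assess is your argument on its own terms.)

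The genuine gap is (v)$\Rightarrow$(i), which you correctly identify as the crux but never actually prove. As written, your cycle establishes (i)$\Leftrightarrow$(iii)$\Rightarrow$(iv)$\Rightarrow$(v), and nothing flows back out of (iv) or (v), so the five conditions are not shown to be equivalent. Saying you would show $\{N(U,V)\}$ is a filter base, or that one must ``align a shrinking pair of open sets,'' restates the goal rather than supplying the argument: that alignment is precisely Furstenberg's lemma, and it is the one place where (v) does real work. It can be closed with the tools you already set up. Given nonempty open $U_1,V_1,U_2,V_2$, use transitivity (which (v) gives) to pick $s\in N(U_1,V_1)$ and $t\in N(U_2,V_2)$, and set $A_1=U_1\cap s^{-1}V_1$, $A_2=U_2\cap t^{-1}V_2$; pick $u\in N(A_1,A_2)$ and set $B=A_1\cap u^{-1}A_2$. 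By monotonicity $N(B,sB)\subseteq N(U_1,V_1)$, and by monotonicity plus commutativity $N(B,tB)=N(uB,tuB)\subseteq N(U_2,V_2)$. Now apply (v) once, to the pair $(B,\,s^{-1}tB)$: there is $\gamma$ with $\gamma B\cap B\neq\emptyset$ and $\gamma B\cap s^{-1}tB\neq\emptyset$; then $\delta=s\gamma$ satisfies $\delta B\cap sB=s(\gamma B\cap B)\neq\emptyset$ and $\delta B\cap tB=s(\gamma B\cap s^{-1}tB)\neq\emptyset$, so $\delta\in N(B,sB)\cap N(B,tB)\subseteq N(U_1,V_1)\cap N(U_2,V_2)$, which is (i). Without some such argument, your proposal proves only the equivalence of (i), (ii) (granting transitivity), and (iii), together with the one-way implications (iii)$\Rightarrow$(iv)$\Rightarrow$(v).
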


That is, the implication $(i)$ to $(iv)$ gives us that $(X, \Gamma)$ being weak mixing implies that for any two open sets $U, V \subset X$, there exists the thick set $N(U,V)$ on which we have strong mixing. One goal that we will have is to extend this characterization to larger classes of groups. 

We can hope also for an analogue of Theorem \ref{thm:mdo2} in the topological setting. For a topological system $(X, \Gamma)$, we define the notion of thick strong mixing. That is, we say that $(X, \Gamma)$ is \textbf{thick strong mixing} if there exists a thick subset $N \subset \Gamma$ on which $(X, N)$ is mixing. We note here that saying that $(X, N)$ is mixing is a slight abuse of notation as $N$ may not be a group. However, we understand this to mean that for every pair of nonempty open sets $U, V \subset X$, there exists a compact set $F \subset N$ such that $\gamma U \cap V \neq \emptyset$ for all $\gamma \in N \backslash F$. 

Now, the problem that we will investigate becomes the following: 

\textbf{Problem Statement:} For what conditions on $X$ and $\Gamma$ does weak mixing of the system $(X, \Gamma)$ imply that it is also thick strong mixing? 

The main result of this paper will be the equivalence of $k$-transitivity for all $k$ and thick strong mixing. We state this result here, with $(iii)$ being an intermediate result that we use in our proofs. 

\begin{theorem}
\label{thm:main}
Let $(X, \Gamma)$ be a topological system where $X$ is second countable space and $\Gamma$ is a second countable, locally compact topological group that is not compact. Then, the following are equivalent: 
\begin{itemize}
\itemsep0em 
\item[(i)] $(X, \Gamma)$ is $k$-transitive for all $k \in \N$. That is, $(X^k, \Gamma)$ is topologically transitive. 
\item[(ii)] $(X, \Gamma)$ is thick strong mixing. That is, there exists a thick subset $N \subset \Gamma$ on which $(X, N)$ is mixing. 
\item[(iii)] For any set of nonempty open sets $\{U_1, \ldots, U_n, V_1, \ldots, V_n\}$ in $X$, $\bigcap_{1 \leq i \leq n} N(U_i, V_i)$ is thick. 
\end{itemize}
\end{theorem}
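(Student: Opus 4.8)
The plan is to establish the cycle $(i)\Rightarrow(iii)\Rightarrow(ii)\Rightarrow(i)$, treating the construction $(iii)\Rightarrow(ii)$ as the heart of the argument and disposing of the other implications quickly. Two elementary facts will be used throughout. First, for any $\gamma\in\Gamma$ and nonempty open $U,V\subset X$ one has the translation identity $\gamma N(U,V)=N(U,\gamma V)$: writing $\eta=\gamma\delta$, the condition $\delta U\cap V\neq\emptyset$ becomes $\eta U\cap\gamma V\neq\emptyset$. Since the action is by homeomorphisms, $\gamma V$ is again nonempty open and each $N(U,V)$ is open. Second, a thick set $N$ is never contained in a compact set: if $N\subset K$ with $K$ compact, then since $\Gamma$ is noncompact we may pick $\gamma\notin KK^{-1}$, whence $\gamma N\cap N\subset\gamma K\cap K=\emptyset$, contradicting thickness applied to $F=\{\gamma\}$.

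With these in hand, $(i)\Rightarrow(iii)$ is immediate: to see that $\bigcap_{l=1}^n N(U_l,V_l)$ is thick, fix a finite $F\subset\Gamma$ and use the translation identity to rewrite $\bigcap_{\gamma\in F\cup\{e\}}\gamma\bigl(\bigcap_l N(U_l,V_l)\bigr)=\bigcap_{\gamma\in F\cup\{e\}}\bigcap_{l=1}^n N(U_l,\gamma V_l)$, a finite intersection of sets of the form $N(\text{open},\text{open})$ and hence nonempty by $k$-transitivity. Conversely $(iii)\Rightarrow(i)$ holds because thick sets are nonempty and $k$-transitivity is exactly the assertion that such finite intersections are nonempty. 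For $(ii)\Rightarrow(i)$, given a thick $N$ with $(X,N)$ mixing and data $U_1,\dots,U_k,V_1,\dots,V_k$, mixing provides compact $F_l\subset N$ with $N\setminus F_l\subset N(U_l,V_l)$; then $\bigcap_l N(U_l,V_l)\supset N\setminus\bigcup_l F_l$, which is nonempty because $N$ is not contained in the compact set $\bigcup_l F_l$.

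It remains to build, assuming $(iii)$, a single thick $N$ on which $(X,N)$ is mixing; this is where the real work lies. Using second countability of $X$ fix a countable base $\{B_j\}$ of nonempty open sets, enumerate all pairs $(P_i,Q_i)$ of basic opens, and set $M_i=\bigcap_{j\le i}N(P_j,Q_j)$, an open thick set by $(iii)$, with $M_1\supset M_2\supset\cdots$. Using second countability and local compactness fix a compact exhaustion $K_1\subset\inte K_2\subset K_2\subset\cdots$ with $\bigcup_m K_m=\Gamma$. The key lemma is that $M_i\setminus K$ is still thick for every compact $K$: by the translation identity $\gamma M_i=\bigcap_{j\le i}N(P_j,\gamma Q_j)$, so for finite $F$ the witness set $\bigcap_{\gamma\in F\cup\{e\}}\gamma M_i$ is itself a finite intersection of $N(\text{open},\text{open})$'s, hence thick by $(iii)$ and therefore not relatively compact; choosing a witness outside the compact set $FK$ places the corresponding translate of $F$ inside $M_i\setminus K$. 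I would then define $N=\bigcup_i\bigl(M_i\cap A_i\bigr)$ with annuli $A_i=\inte K_{m_{i+1}}\setminus K_{m_i}$, the cutoffs $m_{i+1}$ chosen increasing and large, arranging the cells to be locally finite so that each $N\cap K_m$ lies in a compact subset of $N$. Mixing is then easy: for a fixed basic pair $(P_{j_0},Q_{j_0})$ every cell with $i\ge j_0$ satisfies $M_i\cap A_i\subset M_{j_0}\subset N(P_{j_0},Q_{j_0})$, so $N\setminus N(P_{j_0},Q_{j_0})\subset K_{m_{j_0}}$ is relatively compact; passing to basic subsets $P\subset U,\ Q\subset V$ and using $N(P,Q)\subset N(U,V)$ upgrades this to arbitrary open $U,V$.

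The main obstacle, and the step I expect to require the most care, is verifying that this diagonal $N$ is genuinely thick, particularly when $\Gamma$ is not discrete. Thickness must be checked against every finite $F\subset\Gamma$, yet each annulus $A_i$ is bounded and there are uncountably many such $F$, so no single cell can contain a translate of every finite set, and the cells shrink to the smaller $M_i$ as $i$ grows. The route I would pursue is to exploit the lemma together with the openness of the $M_i$: for a fixed finite $F$, the thick witness set for $F$ in $M_i$ is not relatively compact, so it contains points arbitrarily far out, and by choosing each cutoff $m_{i+1}$ large enough relative to $m_i$ one guarantees that for every $i$ the annulus $A_i$ is wide enough to capture an entire $F$-translate lying in $M_i$ once $\operatorname{diam}F$ is small compared with the annulus. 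Since every finite $F$ is eventually of bounded diameter relative to the widening annuli and every $M_i$ still contains far-out $F$-witnesses, each $F$ is captured by some $A_i$, giving thickness of $N$; making this capture uniform across the uncountable family of finite sets, rather than one set at a time, is the delicate bookkeeping that the proof must handle, and openness of the $M_i$ is what lets a single captured configuration serve a whole neighborhood of nearby finite sets.
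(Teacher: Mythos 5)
Your cycle structure and the peripheral implications are all fine: the translation identity $\gamma N(U,V)=N(U,\gamma V)$, the equivalence $(i)\Leftrightarrow(iii)$, the implication $(ii)\Rightarrow(i)$, and your observation that a thick set cannot lie in a compact $K$ (pick $\gamma\notin KK^{-1}$) are correct -- the last is in fact a cleaner argument than the paper's covering proof of its Lemma \ref{lem:notthick}. Your key lemma that $M_i\setminus K$ is thick for every compact $K$ is also correct. The genuine gap is in the heart of $(iii)\Rightarrow(ii)$: thickness of $N=\bigcup_i(M_i\cap A_i)$ does not follow from anything you prove, and the repair you propose (choose the cutoffs $m_{i+1}$ large, so the annuli are wide) cannot work, because the quantifiers are in the wrong order. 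The annuli are fixed before the finite set $F$ is given, while your lemma only says that the witness set $W_F=M_i\cap\bigcap_{\gamma\in F}\gamma M_i$ is not relatively compact; ``unbounded'' does not mean ``meets every sufficiently wide annulus.'' Already for $\Gamma=\Z$: nothing in hypothesis $(iii)$ prevents $M_i$ from being a thick set whose long runs all sit near the numbers $2^{2^n}$, in which case $M_i\cap A_i$ can be empty (or a few stray points) for every $i$, no matter how wide the annuli are; then $N$ contains no long intervals and is not thick. Width is irrelevant -- what matters is \emph{where} $M_i$ places its witnesses, and you cannot steer that from the outside. (In the nonabelian case the ``diameter of $F$'' heuristic fails for a second reason: with a left-invariant metric, $d(\gamma^{-1}x,x)=d(e,x^{-1}\gamma x)$ is unbounded in $x$, so a configuration $\{x\}\cup F^{-1}x$ need not fit in any annulus of prescribed width.) There is a second, related defect: mixing along $N$ requires a \emph{compact} exceptional set $F\subset N$, but your cells are open, so $N\setminus N(U,V)$ is a set whose closure is compact but in general escapes $N$ (boundary points of $M_i\cap A_i$ lie in no cell); no compact subset of $N$ need contain it, and your appeal to ``local finiteness'' does not address this.

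Both defects are cured simultaneously by reversing the construction, which is what the paper does and what your closing sentence gestures at without carrying out: do not prescribe regions in advance and intersect; instead, for each $i$ extract from $M_i$ a \emph{compact} witness set $N_i\subset M_i$, located wherever $M_i$ happens to provide it, and set $N=\bigcup_i N_i$. For countable discrete $\Gamma$ one simply takes $N_i=\{\delta_i,\gamma_1^{-1}\delta_i,\ldots,\gamma_i^{-1}\delta_i\}$ with $\delta_i\in M_i\cap\bigcap_{j\leq i}\gamma_j M_i$, where $\{\gamma_j\}$ enumerates $\Gamma$. In the general locally compact second countable case, fix basis balls $\mathcal{U}_1,\mathcal{U}_2,\ldots$ of $\Gamma$ with compact closures and build $N_i$ to contain a witness configuration for \emph{every} tuple $(\gamma_1,\ldots,\gamma_i)\in\overline{\mathcal{U}_1}\times\cdots\times\overline{\mathcal{U}_i}$: a single configuration $m_0=\gamma_1m_1=\cdots=\gamma_im_i$ inside the open set $M_i$ persists, by continuity of the group operations and openness of $M_i$, for all tuples in a neighborhood, and a finite subcover of the compact product lets $N_i$ be a finite union of closed configuration balls, hence compact and inside $M_i$. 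Thickness of $N$ is then verified against an arbitrary finite set by placing each of its elements in some basis ball, and mixing holds because the exceptional set for a basic pair is a finite union of compact $N_i$'s. So the compactness-plus-openness idea you defer to the end is not bookkeeping to be layered on top of the annulus construction; it must replace it.
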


The outline of the remainder of this paper is as follows. We will begin by examining one particular weak mixing system, the Chac\'{o}n System $(X, \Z)$, and constructing the thick subset of $\Z$ on which it is strong mixing. We will then use ideas from this example to prove the equivalence of $k$-transitivity for all $k$ and thick strong mixing on a large family of topological systems. As all abelian weak mixing systems are $k$-transitive for all $k$, this will give that for abelian group actions, weak mixing implies thick strong mixing. Finally, we will look at the nonabelian case. We will show that weak mixing does not always imply thick strong mixing for nonabelian groups, and will give a characterization of when an analogue of Theorem \ref{thm:mdo1} holds in topological dynamics.

\section{A Motivating Example}
\label{sec:chacon}

Before we begin classifying for which topological systems weak mixing implies thick strong mixing, it is first good to know that there exist systems that are not strong mixing, but are weak mixing and thick strong mixing. In particular, we will look at the Chac\'{o}n system. In proving that the Chac\'{o}n system is thick strong mixing, we will use some techniques that we will build on in our proof of the equivalence of $k$-transitivity for all $k$ and thick strong mixing. 

\subsection{Introduction to Shift Systems}

We start with a quick overview of shift systems. Let $\mathcal{A} = \{0, 1, \ldots, a-1\}$  and $\Omega(\mathcal{A}) = \mathcal{A}^\Z$ be the \textbf{full shift space} on $a$ letters, the space of all bi-infinite sequences on the alphabet $\mathcal{A}$. As we are thinking of elements in $\Omega(\mathcal{A})$ as strings, we will use the notation $a_0a_1\cdots a_n$ to denote the substring consisting of the letters $a_0, a_1, \ldots, a_n$. Then, a basis for the topology on this space is formed by all \textbf{cylinder sets} $[a_0a_1\cdots a_m]_k$, consisting of all $\omega = (\cdots \omega_{-2}\omega_{-1} \omega_0 \omega_1 \omega_2 \cdots) \in \Omega(\mathcal{A})$ such that $\omega_{k+i} = a_i$ for all $0 \leq i \leq n$. Given this topology, we see that $\Omega(\mathcal{A})$ is a compact topological space. 

We can then define a transformation $S$ on this space of sequences such that $S$ acts on an $\omega$ in the space by left shift. That is, we have that $S\omega = \omega^\prime$ where $\omega^\prime_{i} = \omega_{i+1}$ for all $i \in \Z$. Thinking of shifts as as a $\Z$ action, we have then defined the shift system $(\Omega(\mathcal{A}), \Z)$. 

If instead of working with the whole space $\Omega(\mathcal{A})$, we work with some closed shift-invariant subspace $X$, we can also define the \textbf{subshift system} $(X, \Z)$. In the next section, we will explicitly define one such subshift system, the Chac\'{o}n system.

\subsection{The Chac\'{o}n System}

To construct the Chac\'{o}n system, we inductively define blocks $B_n$ by: $$B_1 = 0010, B_2 = 0010001010010, \ldots, B_{n+1} = B_n B_n 1 B_n.$$ We can think of this as beginning with $B_1 = 0010$ and in each stage, defining $B_{n}$ from $B_{n-1}$ by making the substitutions $0 \mapsto 0010$ and $1 \mapsto 1$. In this way, the system we will construct will also be a \textbf{substitution system}.  

Now, we define an $\omega \in \{0,1\}^\Z$ by for each $n$, setting $$\omega_{-l_n} \omega_{- l_n + 1} \cdots \omega_0 \omega_1 \ldots \omega_{l_n - 1} = B_n B_n,$$ where $l_n$ is the length of $B_n$. Then, we define a left-shift transformation $S$ on $w \in \{0,1\}^\Z$ by $Sw = w^\prime$ where $w^\prime_n = w_{n+1}$. Defining $X := \overline{\mathcal{O}}_S \omega$, the closure of the set of all (left and right) shifts of $\omega$, under the topology where the basis elements are the cylinder sets in $\{0,1\}^\Z$, we define the \textbf{Chac\'{o}n System} as $(X, \Z)$. 

While it is known that the Chac\'{o}n system is weak mixing but not strong mixing, we give proofs of these facts here to familiarize ourselves with this important example. We will also show that the Chac\'{o}n system is thick strong mixing. To see this, we first do some preliminary investigation into the intersections of cylinder sets in the Chac\'{o}n system: 

For any string in $A = a_0a_1\cdots a_n \in \{0,1\}^{n+1}$, where $n \in \N$, the cylinder set $[A]_k$ is the set of all $\omega = (\cdots \omega_{-2}\omega_{-1}\omega_0 \omega_1 \omega_2 \cdots) \in X$ for which $\omega_{k+i} = a_i$ for all $0 \leq i \leq n$. For ease of notation, we will use $[A]_0$ and $[A]$ interchangeably. 

For $[A]_k$ to be nonempty, there must be a place in $\omega$ where the string $a_0a_1\cdots a_n$ occurs, implying that $A$ is a substring of some block. $B_\ell$. Using the notation $m[A]_k$ to mean $S^m [A]_k$, we then have the containment $[A]_k \subset [B_\ell]_{\tilde k} = (-\tilde k) [B_\ell] \subset (-\tilde k) [B_{n}]$ for some $\tilde k$ and all $n \geq \ell$. 

Then, for any $a,c \in \Z$, if we look at the set $$N([A]_a, [C]_c) = \{ n : n[A]_a \cap [C]_c \neq \emptyset\},$$ we can find $k,m \in \Z$ such that $$N([A]_a, [C]_c) \supset N([B_k], m[B_k]).$$ It will turn out that to determine the mixing properties of the Chac\'{o}n system, it will be enough to look at these sets $N([B_k], m[B_k])$. 

We first give a description of the set $N([B_1], [B_1])$. To see what this set is, we notice that each occurrence of $B_1$ is contained in some string $B_2 = B_1B_1 1 B_1$. Therefore, $n[B_1] \cap [B_1] \neq \emptyset$ for $n \in H_1 = \{-9, -5, -4, 0, 4, 5, 9\}$, the set of all shifts that would allow a $B_1$ in $B_2$ to intersect another copy of $B_1$ in the same $B_2$.  

Now, we can consider copies of $B_2$ in $B_3 = B_2 B_2 1 B_2$. The length of $B_2$ is $13$, so $B_2$ can intersect itself in a copy of $B_3$ by translations by $n \in H_2 = \{-27, -14, -13, 0, 13, 14, 27\}$, and therefore $B_1$ may intersect another copy of $B_1$ by any translation in $H_2$ followed by any translation in $H_1$. For an arbitrary $m \in \N$, we denote by $H_m$ the set of translations that will map a copy of $B_m$ in $B_{m+1}$ onto itself, and as we can check that the length of $B_m$ is $\frac{3^{m+1}-1}{2}$, we have that 
$$H_m = \left\{- 3^{m+1}, - \frac{3^{m+1}+1 }{2}, - \frac{3^{m+1} - 1}{2} , 0, \frac{3^{m+1}-1}{2}, \frac{3^{m+1} +1}{2}, 3^{m+1}\right\}.$$
Hence, all $n$ for which $n[B_1] \cap [B_1] \neq \emptyset$ is $$N([B_1], [B_1]) = H_1 \oplus H_2 \oplus H_3 \oplus \cdots.$$ Similarly, the $n$ for which $n[B_k] \cap [B_k] \neq \emptyset$ is  $$N([B_k], [B_k]) = H_k \oplus H_{k+1} \oplus H_{k+2} \oplus \cdots. $$

Furthermore, we can see that $N([B_k], m[B_k]) = mN([B_k], [B_k])$. Now, we can prove the following concerning the mixing properties of the system. 

\begin{proposition}
The Chac\'{o}n System is weak mixing. 
\end{proposition}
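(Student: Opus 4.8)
The plan is to verify the definition of topological weak mixing directly: given four nonempty open sets $U_1, U_2, V_1, V_2 \subset X$, I must produce a single $n \in \Z$ lying in both $N(U_1, V_1)$ and $N(U_2, V_2)$. Since the cylinder sets form a basis for the topology on $X$, I would first shrink each $U_i$ and $V_i$ to a nonempty cylinder set it contains; if I can find $n$ with $n[A_i] \cap [C_i] \neq \emptyset$ for the two shrunken pairs $[A_i], [C_i]$, then a fortiori $nU_i \cap V_i \neq \emptyset$. So it suffices to treat cylinder sets $U_i = [A_i]$, $V_i = [C_i]$.

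Next I would invoke the reduction established just above: choosing $k$ large enough that all four words $A_1, A_2, C_1, C_2$ occur as subwords of $B_k$, each pair satisfies a containment $N([A_i], [C_i]) \supseteq N([B_k], m_i[B_k])$, where $m_i$ is the integer determined by the offsets of $A_i$ and $C_i$ inside $B_k$, and $N([B_k], m_i[B_k])$ is simply the translate $m_i + N([B_k], [B_k])$. Writing $N := N([B_k], [B_k]) = H_k \oplus H_{k+1} \oplus \cdots$, finding a common $n$ then reduces to showing that $(m_1 + N) \cap (m_2 + N) \neq \emptyset$, that is, that the fixed integer $m_1 - m_2$ lies in the difference set $N - N$. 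Thus the whole proposition comes down to the single claim that $N - N = \Z$.

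To prove that claim I would exploit the self-similar, multi-scale structure of $N$. Since $N$ is the set of finitely supported sums $\sum_{j \geq k} h_j$ with $h_j \in H_j$, any finitely supported choice $d_j \in H_j - H_j$ produces an element $\sum_{j \geq k} d_j \in N - N$. The key feature is that every $H_j$ contains the two consecutive integers $\tfrac{3^{j+1}-1}{2}$ and $\tfrac{3^{j+1}+1}{2}$, so $1 \in H_j - H_j$, and symmetrically $-1 \in H_j - H_j$, for every $j$. Because there are infinitely many levels $j \geq k$, any positive integer $m$ can be realized by taking $d_j = 1$ on $m$ distinct levels and $d_j = 0$ elsewhere, and any negative integer by the mirror choice; hence every integer lies in $N - N$. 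This gives $N - N = \Z$, so in particular $m_1 - m_2 \in N - N$, and the required $n$ exists.

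The reductions to cylinder sets and to the blocks $B_k$ are bookkeeping, but the claim $N - N = \Z$ is where the geometry of the Chac\'{o}n construction genuinely enters, and I expect it to be the main obstacle. The essential mechanism is that the gap pattern of each $H_j$ always contributes a unit step $\pm 1$, while the infinitely many independent scales let these unit steps accumulate to reach any prescribed integer; the care needed is in checking that the chosen realizations are consistent with the direct-sum description of $N$ so that the differences really do lie in $N - N$.
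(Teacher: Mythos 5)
Your proposal is correct and follows essentially the same route as the paper: reduce to cylinder sets, then to translates of $N([B_k],[B_k]) = H_k \oplus H_{k+1} \oplus \cdots$, and exploit the consecutive integers $\tfrac{3^{j+1}-1}{2}, \tfrac{3^{j+1}+1}{2} \in H_j$ summed over several levels to realize any prescribed difference. The paper does exactly this, merely exhibiting the two explicit sums $a$ and $b$ with $b - a = m - n$ over consecutive levels rather than packaging the mechanism as the statement $N - N = \Z$.
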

\begin{proof}
We suppose that $U_1, U_2, V_1, V_2 \subset X$ are non-empty and open. Then, we can find nonempty cylinder sets $[\tilde U_1], [\tilde U_2], [\tilde V_1], [\tilde V_2]$, where $[\tilde U_1] \subset U_1$ and $\tilde U_1$ is a finite string of zeros and ones, and similarly for $U_2, V_1,$ and $V_2$. Now, we can find a $k$ large enough such that $\tilde U_1, \tilde U_2, \tilde V_1, \tilde V_2$ are all substrings in $B_k$, and $n,m,k_1, k_2 \in \Z$ such that 
\begin{align*} 
N(U_1, V_1) \cap N(U_2, V_2) & \supset N([\tilde U_1], [\tilde V_1]) \cap N([\tilde U_2], [\tilde V_2]) \\
& \supset N([B_k]_{k_1}, [B_k]_{k_1+m}) \cap N([B_k]_{k_2}, [B_k]_{k_2+n}) \\
& \supset N([B_k], m[B_k]) \cap N([B_k], n[B_k]) \\
& = m N([B_k], [B_k]) \cap n N([B_k], [B_k]).
\end{align*} 

We may assume without loss of generality that $m > n$. Since $N([B_k], [B_k]) = H_k \oplus H_{k+1} \oplus H_{k+2} \oplus \ldots,$ we see that both $$a = \sum_{i=k}^{k + (m-n) - 1} \frac{3^{i+1} - 1}{2} \text{ and } b = \sum_{i=k}^{k + (m-n)-1} \frac{3^{i+1} +1}{2}$$ are contained in $N([B_k], [B_k])$. But $b = a + (m-n)$, so $$m+a = b+n \in m N([B_k], [B_k]) \cap n N([B_k], [B_k]).$$
This implies then that $N(U_1, V_1) \cap N(U_2, V_2) \neq \emptyset$ and the Chac\'{o}n system is weak mixing. 
\end{proof} 

We can also show that the Chac\'{o}n System is not strong mixing. 

\begin{proposition} The Chac\'{o}n System is not strong mixing. 
\end{proposition}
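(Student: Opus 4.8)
The plan is to disprove strong mixing by exhibiting a single pair of nonempty open sets whose $N$-set fails to be cofinite. Since topological strong mixing of a $\Z$-action requires, for every nonempty open $U,V$, a compact (hence finite) $F \subset \Z$ with $\gamma U \cap V \neq \emptyset$ for all $\gamma \notin F$ — that is, it requires $N(U,V)$ to be cofinite — it is enough to produce one bad pair. The natural candidate is $U = V = [B_1]$, which reduces the task to showing that the difference set $N([B_1],[B_1]) = H_1 \oplus H_2 \oplus \cdots$ has infinite complement in $\Z$. Writing $l_m = \frac{3^{m+1}-1}{2}$ for the length of $B_m$, I will use throughout that $H_i = \{0, \pm l_i, \pm(l_i+1), \pm(2l_i+1)\}$ and that $l_{i+1} = 3l_i + 1$.

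First I would record a \emph{Gap Lemma}: every nonzero element of $N([B_m],[B_m]) = \bigoplus_{i \geq m} H_i$ has absolute value at least $l_m$. Indeed, if $n = \sum_{i \geq m} h_i$ with $h_i \in H_i$ and largest nonzero term $h_L$, then $|h_L| \geq l_L$, while the remaining terms contribute at most $\sum_{i=m}^{L-1}(2l_i+1) = \sum_{i=m}^{L-1} 3^{i+1} = l_L - l_m$ in modulus, so $|n| \geq l_L - (l_L - l_m) = l_m$. In particular the only element of $N([B_m],[B_m])$ in $(-l_m, l_m)$ is $0$, and every nonzero element of the finite partial sum $\bigoplus_{i=1}^{m-1} H_i$ has modulus at least $l_1 = 4$.

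Next I would sharpen this to a \emph{Localization Lemma}: $N([B_m],[B_m]) \cap (0, 2l_m) = \{l_m, l_m + 1\}$ for every $m$. The inclusion $\supseteq$ is immediate. For $\subseteq$, given $0 < n < 2l_m$ split off the lowest term, $n = h_m + n'$ with $h_m \in H_m$ and $n' \in \bigoplus_{i > m} H_i = N([B_{m+1}],[B_{m+1}])$; by the Gap Lemma $n'$ is either $0$ or has modulus $\geq l_{m+1} = 3l_m + 1$. The case $n' = 0$ forces $n \in \{l_m, l_m+1\}$ directly, $n' \leq -(3l_m+1)$ forces $n < 0$, and the remaining case $n' \geq 3l_m + 1$ forces $h_m = -(2l_m+1)$ and $n' \in (0,2l_{m+1})$, whence $n' \in \{l_{m+1}, l_{m+1}+1\}$ and again $n \in \{l_m, l_m+1\}$. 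Since the last step is exactly the same assertion one level higher, the lemma follows by induction on the number of indices separating the lowest and highest nonzero terms of $n$.

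Finally I would deduce that $l_m - 1 \notin N([B_1],[B_1])$ for every $m \geq 2$, yielding the infinite family of gaps $\{l_m - 1 : m \geq 2\} = \{12, 39, \ldots\}$. Writing $N([B_1],[B_1]) = F \oplus N([B_m],[B_m])$ with $F = \bigoplus_{i=1}^{m-1} H_i \subseteq [-(l_m - l_1), l_m - l_1]$, any representation $l_m - 1 = f + g$ gives $g \in (0, 2l_m) \cap N([B_m],[B_m]) = \{l_m, l_m+1\}$ by the two lemmas, hence $f \in \{-1, -2\}$, which is impossible since every nonzero element of $F$ has modulus at least $l_1 = 4$. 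Thus $N([B_1],[B_1])$ is not cofinite and the system is not strong mixing. I expect the main obstacle to be the Localization Lemma: because a single small value can be written using arbitrarily high-index terms whose large positive and negative contributions nearly cancel, there is no a priori bound on the indices occurring, so one must genuinely use the recursion $l_{i+1} = 3l_i + 1$ to rule out all such cancellations at once. A naïve residue count gives no constraint (the number of summands is unbounded), and a cardinality count fails as well, since $|H_i|^{L} = 7^{L}$ outgrows the length $\sim 3^{L}$ of the relevant interval.
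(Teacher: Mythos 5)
Your proposal is correct and follows essentially the same route as the paper: your witnesses $l_m - 1 = \frac{3^{m+1}-3}{2}$, $m \geq 2$, are exactly the paper's set $G$, and your splitting of $N([B_1],[B_1])$ into $F = \bigoplus_{i=1}^{m-1} H_i$ plus a tail, with magnitude bounds on each piece, mirrors the paper's decomposition into a term from $H_1 \oplus \cdots \oplus H_{m-1}$, a term from $H_m$, and a term from $H_{m+1} \oplus H_{m+2} \oplus \cdots$. The one substantive difference is that your Localization Lemma supplies an inductive proof of the structural fact about tail sums (nonzero elements of $\bigoplus_{i > m} H_i$ near the origin are only $\pm l_{m+1}, \pm(l_{m+1}+1)$) which the paper asserts without proof, so your writeup is a rigorous completion of the same argument rather than a new one.
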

\begin{proof} We consider the set $$G = \left\{ \frac{3^{m+1}-3}{2} : m \in \N\right\}$$ and claim that no $g \in G$ is in $H_1 \oplus H_2 \oplus H_3 \oplus \ldots.$ We pick such a $g = \frac{3^{m+1} - 3}{2}$. Then, any non-zero element in $H_{m+1} \oplus H_{m+2} \oplus \ldots$ has absolute value $\frac{3^{m+2} - 1}{2}, \frac{3^{m+2}+1}{2},$ or $\geq 3^{m+2} - 1$. 

Furthermore, any term in $H_1 \oplus H_2 \oplus \ldots \oplus H_{m-1}$ has absolute value in $[4, \frac{3^{m+1}-5}{2}]$. With these bounds, we can see that we cannot have that the sum of a term in $H_1 \oplus \ldots \oplus H_{m-1}$, a term in $H_m$, and a term in $H_{m+1} \oplus H_{m+2} \oplus \ldots$ is $\frac{3^{m+1} - 3}{2}$. 

Hence, we have that $n [B_1] \cap  [B_1] = \emptyset$ for all $n \in G$, giving us that there does there exist a finite $F$ such that $n[B_1] \cap [B_1] \neq \emptyset$ for all $n \in \Gamma \backslash F$. Thus, $(X,\Gamma)$ is not strong mixing. 
\end{proof}

As the Chac\'{o}n System is weak mixing, but not strong mixing, it is a good example to work with to see if we can find a thick $N \subset \Gamma = \Z$ on which the system looks strong mixing. It turns out that for this particular system, such a thick $N$ exists. 

To show this, we need the following lemma, characterizing thick sets in $\Z$. 

\begin{lemma} 
\label{lem:thick}
A set $N \subset \Z$ is thick if and only if it contains arbitrarily long intervals (strings of consecutive integers). 
\end{lemma}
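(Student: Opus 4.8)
The plan is to prove the biconditional in Lemma~\ref{lem:thick} by establishing each direction separately, working directly from the definition of thickness that was given in the text: a set $N \subset \Z$ is thick if for every finite $F \subset \Z$ there exists $\gamma_0 \in N$ with $F + \gamma_0 \subset N$ (writing the group operation additively, so $F^{-1}\gamma_0$ becomes $-F + \gamma_0$, and since $F$ ranges over all finite sets we may as well take $F = -F'$ and speak of translates $\gamma_0 + F'$). The key observation is that containing arbitrarily long intervals and satisfying the thickness condition are two ways of saying the same combinatorial fact about $\Z$.

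First I would prove the easier direction, that containing arbitrarily long intervals implies thickness. Given a finite set $F \subset \Z$, I would let $d = \max_{f \in F} f - \min_{f \in F} f$ be its diameter, so that $F$ fits inside an interval of length $d+1$. By hypothesis $N$ contains an interval $I = [a, a+d]$ of $d+1$ consecutive integers. Setting $\gamma_0 = a - \min_{f \in F} f$, every element of $\gamma_0 + F$ lands in $I \subset N$, which is exactly the thickness condition. This direction is essentially a bookkeeping argument about fitting a finite set inside a long enough interval.

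For the converse, that thickness implies the existence of arbitrarily long intervals, I would fix an arbitrary target length $L \in \N$ and apply the thickness condition to the specific finite set $F = \{0, 1, 2, \ldots, L-1\}$. Thickness then produces a $\gamma_0 \in N$ with $\gamma_0 + F = \{\gamma_0, \gamma_0 + 1, \ldots, \gamma_0 + L - 1\} \subset N$, which is precisely an interval of $L$ consecutive integers inside $N$. Since $L$ was arbitrary, $N$ contains arbitrarily long intervals.

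I do not expect a genuine obstacle here, since the result is really an unwinding of definitions specialized to the group $\Z$; the only point requiring mild care is bookkeeping the translation direction so that it matches the convention $N \cap \bigcap_{\gamma \in F} \gamma N \neq \emptyset$ stated in the text, and being careful that the finite set $F$ in the definition is allowed to be arbitrary (so that choosing $F$ to be a literal interval in the converse is legitimate). I would therefore present both directions cleanly, emphasizing that the interval $\{0, \ldots, L-1\}$ is the natural test set that extracts long intervals directly from the abstract thickness property.
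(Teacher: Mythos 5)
Your strategy coincides with the paper's: both directions are unwindings of the definition, with the forward direction fitting $F$ inside a sufficiently long interval and the converse using the test set $\{0,1,\ldots,L-1\}$ (the paper phrases the converse contrapositively, with $F=\{1,\ldots,n\}$, but it is the same argument). Your converse direction is correct and complete.

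The forward direction, however, has a genuine (though easily repaired) gap. The definition of thickness in the paper --- and your own restatement of it --- requires the witness itself to lie in $N$: one needs $\gamma_0 \in N$ \emph{and} $\gamma_0 + F \subset N$ (equivalently, the intersection $N \cap \bigcap_{\gamma \in F} \gamma N$ has $N$ itself as one of its terms). Your witness $\gamma_0 = a - \min_{f \in F} f$ satisfies $\gamma_0 + F \subset I \subset N$, but nothing guarantees $\gamma_0 \in N$ unless $0 \in F$. For instance, with $N = \bigcup_{k \geq 1} \{10^k, 10^k+1, \ldots, 10^k+k\}$ (which contains arbitrarily long intervals) and $F = \{100\}$, your recipe permits the choice $I = \{10\}$, which gives $\gamma_0 = -90 \notin N$, so the thickness condition as you stated it has not been verified. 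The repair is one line: replace $F$ by $F \cup \{0\}$ at the outset, since the thickness condition for $F \cup \{0\}$ implies it for $F$; or argue as the paper does, taking $n = \max_{f \in F} |f|$, an interval $I = \{c, c+1, \ldots, c+2n\} \subset N$ of length $2n+1$, and its midpoint $c+n$ as the witness --- it lies in $I \subset N$, and $c+n-\gamma \in I$ for every $\gamma \in F$, so it belongs to $N \cap \bigcap_{\gamma \in F} \gamma N$.
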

\begin{proof} If $N$ contains arbitrarily long intervals, then for any finite set $F \subset \Z$, if $n = \max\{|a| : a\in F\}$, then we can find an interval $I = \{c, c+1, \ldots, c+{2n}\} \subset N$ with $2n+1$ elements. It follows then that $c+n \in I \cap \bigcap_{\gamma \in F} \gamma I \subset N \cap \bigcap_{\gamma \in F} \gamma N$, and so $N$ is thick. 

Conversely, if $N$ does not contain an interval of length $n$, then for $F = \{1,2,\ldots, n\}$, $N \cap \bigcap_{\gamma \in F} \gamma N = \emptyset$, and so $N$ is not thick. 
\end{proof}

Now, we can return to the Chac\'{o}n System, with the following proposition. 

\begin{proposition} Let $(X, \Z)$ be the Chac\'{o}n System. Then, $(X, \Z)$ is thick strong mixing. \end{proposition}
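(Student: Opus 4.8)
The plan is to exhibit a single thick set $N \subset \Z$ with the property that for every pair of nonempty open sets $U, V$, all but finitely many elements of $N$ lie in $N(U,V)$; this is exactly the statement that $(X,N)$ is mixing. Since every open set contains a cylinder set and $N(U',V') \subset N(U,V)$ whenever $U' \subset U$ and $V' \subset V$, it suffices to arrange that $N \setminus N([A]_a,[C]_c)$ is finite for every pair of cylinder sets. By the reductions already recorded above, $N([A]_a,[C]_c) \supset N([B_k], m[B_k]) = m + N([B_k],[B_k])$ for suitable $k \in \N$ and $m \in \Z$, so it is enough to guarantee that $N \setminus \bigl(m + N([B_k],[B_k])\bigr)$ is finite for every $k \in \N$ and $m \in \Z$.

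Two features of the sets $N([B_k],[B_k]) = H_k \oplus H_{k+1} \oplus \cdots$ drive the construction. First, because $0 \in H_i$ for every $i$, these sets are nested: $N([B_j],[B_j]) \subset N([B_k],[B_k])$ whenever $j \geq k$. Second, each $N([B_j],[B_j])$ contains arbitrarily long intervals. Indeed, exactly as in the proof of weak mixing, the two elements $\tfrac{3^{i+1}-1}{2}$ and $\tfrac{3^{i+1}+1}{2}$ of $H_i$ differ by $1$; choosing one of them from each of $H_j, H_{j+1}, \ldots, H_{j+L-1}$ and $0$ from every remaining factor realizes every integer in $\left[\,\sum_{i=j}^{j+L-1}\tfrac{3^{i+1}-1}{2},\ \sum_{i=j}^{j+L-1}\tfrac{3^{i+1}-1}{2}+L\,\right]$, an interval of length $L+1$ sitting inside $N([B_j],[B_j])$.

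With these facts in hand, I would build $N$ as follows. For each $j \in \N$, use the second fact to choose an interval $\tilde I_j \subset N([B_j],[B_j])$ of length at least $3j+1$, and let $I_j \subset \tilde I_j$ be its central subinterval of length $j+1$, chosen so that $I_j - m \subset \tilde I_j$ for all $|m| \leq j$. Set $N := \bigcup_{j \in \N} I_j$. Since $N$ contains the interval $I_j$ of length $j+1$ for every $j$, it contains arbitrarily long intervals and is therefore thick by Lemma \ref{lem:thick}. To verify mixing along $N$, fix $k \in \N$ and $m \in \Z$. For every $j \geq \max(k,|m|)$ we have $I_j - m \subset \tilde I_j \subset N([B_j],[B_j]) \subset N([B_k],[B_k])$, using $|m| \leq j$ for the first inclusion and $j \geq k$ with the nesting for the last; hence $I_j \subset m + N([B_k],[B_k])$. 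Thus $N \setminus \bigl(m + N([B_k],[B_k])\bigr) \subset \bigcup_{j < \max(k,|m|)} I_j$ is finite, as required.

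The step that requires the most care --- and the reason a naive choice such as $N = N([B_1],[B_1])$ fails --- is reconciling a single $N$ with the translations by $m$ coming from different pairs of open sets, since each pair forces $N$ to lie eventually inside a different translate $m + N([B_k],[B_k])$. The construction handles this by giving each $I_j$ a buffer of width $j$ on both sides inside the longer interval $\tilde I_j$, so that $I_j$ survives every shift by $|m| \leq j$; combined with the nesting of the sets $N([B_k],[B_k])$, this lets a single interval $I_j$ meet all the requirements indexed by $k \leq j$ and $|m| \leq j$ at once, so that for any fixed $(k,m)$ only finitely many $I_j$ can fail.
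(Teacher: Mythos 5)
Your proof is correct and follows essentially the same strategy as the paper's: both construct $N$ as a union of one interval per level $j$, drawn from the nested thick sets built out of $H_j \oplus H_{j+1} \oplus \cdots$, so that each pair of nonempty open sets excludes only finitely many levels of $N$. The only cosmetic difference is that the paper absorbs the translations $m$ by first intersecting the translates $N([B_\ell], m[B_\ell])$, $|m| \leq l_\ell$, into a thick set $\mathcal{M}_\ell$ and then picking intervals from the $\mathcal{M}_\ell$, whereas you absorb them with the buffer of width $j$ around each $I_j$ inside $\tilde I_j$ --- the same idea in different packaging.
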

\begin{proof} 
We first claim $N([B_k], [B_k])$ is thick for each $k$. This is evident from the description of the $H_i$'s and the characterization of thick sets (Lemma \ref{lem:thick}). To find an interval of length $m$ in $H_k \oplus H_{k+1} \oplus \ldots$, we notice that the set $$\bigoplus_{i = k}^{k+m-2} \left\{ \frac{3^{i+1}-1}{2}, \frac{3^{i+1} +1}{2}\right\}$$ is an interval of length $m$ (for $m \geq 2$) that is in $H_k \oplus H_{k+1} \oplus \ldots.$

We let $l_k$ be the length of the string $B_k$. Then, for any $-l_k \leq n,m \leq l_k$, we see that $$N(n[B_k], m[B_k]) \subset N([B_k], (-l_k) [B_k]) \cap N([B_k], (-l_k+1)[B_k]) \cap \ldots \cap N([B_k], l_k[B_k])=: \mathcal{M}_k,$$ the latter of which is still a thick set. For any $n, m, k$ in $\Z$, by choosing an $\ell \in \Z$ such that $l_\ell > |n-m|$, we see that $$N(n[B_k], m[B_k]) \supset N(n[B_\ell], m[B_\ell]) = N([B_\ell], (m-n)[B_\ell]) \supset \mathcal{M}_\ell.$$

As $[B_k] \subset [B_\ell]$ and $l_k \geq l_\ell$ for all $k \geq \ell$, we see that we have the containment $$\mathcal{M}_1 \supset \mathcal{M}_2 \supset \mathcal{M}_3 \supset \ldots$$ and each $\mathcal{M}_\ell$ is thick. This implies that for any $n[B_k]$ and $m[B_k]$, $N(n[B_k], m[B_k]) \supset \mathcal{M}_\ell$ for all $\ell$ large enough. 

Now, we can construct our set $N$ on which we have strong mixing as follows. Starting with $\mathcal{M}_1$, we pick an interval of length $i$ from each $\mathcal{M}_i$ and place those elements in the set $N$, which we can do as each $\mathcal{M}_i$ is thick. Then, we notice that for each $m \in \N$, $N$ contains only a finite number number of elements not in $\mathcal{M}_m$ (at most those chosen from $\mathcal{M}_1, \ldots, \mathcal{M}_{m-1}$) and $N$ is thick by construction. 

Now, for any non-empty open sets $U, V \subset X$, we had that $N(U, V) \supset N([B_\ell], m[B_\ell]) \supset \mathcal{M}_\ell$ for some $\ell$ and $m$ such that $|m| \leq l_\ell$, so that $nU \cap V \neq \emptyset$ for all $n \in N \backslash F$, where $F$ is a finite set of size at most $1 + 2 + \ldots + (\ell -1) = \frac{\ell(\ell-1)}{2}.$ Hence, $(X, \Z)$ is thick strong mixing along $N$. 
\end{proof}

As we have the result for the Chac\'{o}n System, we now work on generalizing these results to larger classes of spaces and groups. 

\subsection{Comments on the Chac\'{o}n System Example}

Here, we make a note of a few properties that made it possible for us to construct the thick set $N$ in the Chac\'{o}n System example. 

First, we had a growing collection of sets $C_k = \{(-l_k)[B_k], \ldots, [B_k], \ldots, l_k [B_k]\}$ such that each open set $U \subset X$ eventually contained some $U_k \in C_k$ for all $k$ large enough. This allowed us to construct a thick set $\mathcal{M}_k \subset \Gamma$ such that $N(U,V) \supset \mathcal{M}_k$ for all $U, V \in C_k$. 

As the $\mathcal{M}_k$ were all thick, we could strategically choose a few elements from each $\mathcal{M}_k$ to put into a set $N$ that later became the thick set on which we had thick strong mixing. This idea of building up out thick set $N$ from larger and larger pieces of the thick sets $\mathcal{M}_k$ will be a key component of our proofs in later sections.

\section{Equivalence of $k$-transitivity for all $k$ and Thick Strong Mixing}

We say that a system $(X, \Gamma)$ is \textbf{transitive} if for any two nonempty open sets $U, V \subset X$, there exists a $\gamma \in \Gamma$ such that $\gamma U \cap V \neq \emptyset$. Furthermore, we say that $(X, \Gamma)$ is \textbf{$k$-transitive} if $(X^k, \Gamma)$ is transitive. We notice that $k$-transitivity is equivalent to the property that $N(U_1, V_1) \cap \ldots \cap N(U_k, V_k)$ is nonempty for all collections of nonempty open sets $U_1, \ldots, U_k, V_1, \ldots, V_k$. From this definition, we see that weak mixing is equivalent to $2$-transitivity. 

In this section, we will prove the equivalence of $k$-transitivity for all $k$ and thick strong mixing (under some initial conditions), breaking the proof into three sections. We will then discuss some consequences of this equivalence. To restate, our goal is to prove the following: 

\textbf{Goal:} For systems $(X, \Gamma)$ where $X$ is second countable and $\Gamma$ is locally compact, second countable, but not compact, $k$-transitivity for all $k$ is equivalent to thick strong mixing.

\subsection{Forward Implication for Countable, Discrete Groups}
We start with the forward implication for countable discrete groups, which is a simpler case of one direction of our general theorem. To begin, we prove a lemma that is of a similar spirit to Theorem \ref{thm:tdt}. 

\begin{lemma}
\label{lem:trans}
A topological system $(X, \Gamma)$ is $k$-transitive for every $k \in \N$ if and only if for any set of nonempty open sets $\{U_1, \ldots, U_n, V_1, \ldots, V_n\}$ in $X$, we have that $\bigcap_{1 \leq i \leq n} N(U_i, V_i)$ is thick. 
\end{lemma}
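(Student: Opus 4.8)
The statement is an "iff" between $k$-transitivity for all $k$ and the thickness of finite intersections $\bigcap_{1 \leq i \leq n} N(U_i, V_i)$. The backward direction is nearly immediate: if every such intersection is thick, then in particular it is nonempty (a thick set is never empty, since thickness requires a point $\gamma_0$ with $F^{-1}\gamma_0 \subset N$ for every finite $F$), and nonemptiness of $\bigcap_{1 \leq i \leq n} N(U_i, V_i)$ for all choices of open sets is exactly the definition of $n$-transitivity as recorded just before the lemma. So the content lies entirely in the forward direction, and that is where I would spend the effort.

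For the forward direction, I would assume $(X, \Gamma)$ is $k$-transitive for all $k$ and fix nonempty open sets $U_1, \ldots, U_n, V_1, \ldots, V_n$; the goal is to show $N := \bigcap_{1 \leq i \leq n} N(U_i, V_i)$ is thick, i.e. for an arbitrary finite $F \subset \Gamma$ I must produce $\gamma_0 \in N$ with $F^{-1}\gamma_0 \subset N$, equivalently $\gamma_0 \in N \cap \bigcap_{\gamma \in F} \gamma N$. The plan is to unwind what $\gamma \in \gamma' N$ means: $\gamma \in \gamma' N(U_i, V_i)$ says $\gamma^{-1}\gamma'$ (or $\gamma'^{-1}\gamma$, depending on the action convention) sends $U_i$ into $V_i$ in the relevant sense, so membership in the big intersection $N \cap \bigcap_{\gamma' \in F} \gamma' N$ translates into a single element simultaneously satisfying finitely many transitivity conditions — one from $N$ itself and one for each $\gamma' \in F$. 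Since $F$ is finite and we have $n$ open-set pairs, this is a total of $n(|F|+1)$ conditions, all of the form "$\delta$ maps some open set to meet some open set."

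The key step is therefore to repackage these finitely many conditions as a single instance of $m$-transitivity for $m = n(|F|+1)$. Concretely, for each $\gamma' \in F \cup \{e\}$ and each $i$, the condition contributing to $\gamma_0 \in \gamma' N(U_i, V_i)$ should be rewritten as $\gamma_0 \in N(U_i', V_i')$ for suitably translated open sets $U_i', V_i'$ (translating $U_i$ or $V_i$ by $\gamma'$, which keeps them open since $\Gamma$ acts by homeomorphisms). Then $m$-transitivity guarantees $\bigcap N(U_i', V_i') \neq \emptyset$, and any $\gamma_0$ in that intersection lies in $N \cap \bigcap_{\gamma' \in F} \gamma' N$, proving thickness.

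The main obstacle I anticipate is purely bookkeeping: getting the variance of the group action right so that $\gamma_0 \in \gamma' N(U_i, V_i)$ really does reduce to $\gamma_0 \in N(\gamma'^{-1} U_i, V_i)$ or the analogous translated statement, and confirming that translating open sets by group elements preserves openness and nonemptiness. Once the conditions are correctly translated, the appeal to $m$-transitivity is mechanical. I would write out the translation carefully for a single pair $(U_i, V_i)$ and a single $\gamma'$, verify the set identity $\gamma' N(U_i, V_i) = N(\gamma' U_i, \gamma' V_i)$ (or its correct variant), and then let the finite intersection over all $(\gamma', i)$ be handled by a single invocation of $m$-transitivity.
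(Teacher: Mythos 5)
Your plan follows the same route as the paper: the backward direction is the observation that thick sets are nonempty, and the forward direction rewrites membership in the translates $\gamma' N(U_i,V_i)$ as membership in $N$ of translated open-set pairs and then invokes a single instance of higher-order transitivity. That structure is correct and is exactly the paper's proof.

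The one point you must fix is the identity itself, and both candidate forms you wrote down are wrong for a general (nonabelian) group, which is the setting of the lemma. With the paper's convention $N(A,B) = \{\gamma : \gamma A \cap B \neq \emptyset\}$, one has $N(\gamma'^{-1}U, V) = N(U,V)\,\gamma'$ (a \emph{right} translate, not the left translate appearing in the definition of thickness), and $N(\gamma' U, \gamma' V) = \gamma'\, N(U,V)\, \gamma'^{-1}$ (a conjugate); both coincide with $\gamma' N(U,V)$ only when $\Gamma$ is abelian. The correct identity is
$$\gamma'\, N(U,V) = N(U, \gamma' V),$$
proved by noting that $\eta U \cap \gamma' V \neq \emptyset$ iff $\gamma'^{-1}\eta\, U \cap V \neq \emptyset$, since $\gamma'$ acts bijectively. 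So only $V$ gets translated. With this identity, $N \cap \bigcap_{\gamma' \in F}\gamma' N = \bigcap_{\gamma', i} N(U_i, \gamma' V_i)$ (with $\gamma'$ ranging over $F \cup \{e\}$), each $\gamma' V_i$ is open and nonempty, and $n(|F|+1)$-transitivity finishes the argument exactly as you describe. Since you explicitly flagged this bookkeeping as the step to verify, the gap is small, but as written neither of your proposed reductions would close the proof outside the abelian case.
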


\begin{proof}
We suppose that $(X, \Gamma)$ is $k$-transitive for each $k \in \N$. Then, if $\gamma_1,\ldots, \gamma_m$ are elements of $\Gamma$, we have that 
\begin{align*}
\bigcap_{\substack{1 \leq \ell \leq m \\ 1 \leq i \leq n}} \gamma_\ell N(U_i, V_i) = \bigcap_{\substack{1 \leq \ell \leq m \\ 1 \leq i \leq n}}  N(U_i, \gamma_\ell V_i) ,
\end{align*}
which is nonempty by the $\ell n$-transitivity of $(X, \Gamma)$ as each $\gamma_\ell V_i$ is open. For the reverse implication, given any collection of nonempty open sets $U_1, \ldots, U_k, V_1, \ldots, V_k$, $\bigcap_{1 \leq i \leq k} N(U_i, V_i)$ is thick and therefore is nonempty. So, $(X, \Gamma)$ is $k$-transitive. 
\end{proof}

The forward implication of this equivalence will be useful in our proofs of the equivalence of $k$-transitivity for all $k$ and thick strong mixing. Now, we state and prove the first of our propositions that will give us this equivalence. 

\begin{proposition}
\label{prop:cmc} For $X$ a second countable space and $\Gamma$ a countable, discrete group (that is not finite), if $(X, \Gamma)$ is $k$-transitive for all $k$, then it is thick strong mixing. 
\end{proposition}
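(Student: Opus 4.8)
The plan is to mirror the construction carried out for the Chac\'{o}n system, replacing the cylinder sets and intervals used there with a countable basis for $X$ and the general group-theoretic definition of thickness. First I would use second countability to fix a countable basis $\{W_1, W_2, \ldots\}$ of nonempty open subsets of $X$, and, since $\Gamma$ is countably infinite, enumerate $\Gamma = \{\gamma_1, \gamma_2, \ldots\}$ with $\gamma_1 = e$ and set $F_k = \{\gamma_1, \ldots, \gamma_k\}$. These $F_k$ form an increasing sequence of finite sets exhausting $\Gamma$ with $e \in F_k$, and they will play the role that the growing interval lengths $l_k$ played in the Chac\'{o}n example.

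Next, for each $k$ I would define $\mathcal{M}_k := \bigcap_{1 \le i, j \le k} N(W_i, W_j)$. Since this is a finite intersection of sets of the form $N(U,V)$, Lemma~\ref{lem:trans} together with the hypothesis that $(X,\Gamma)$ is $k$-transitive for all $k$ guarantees that each $\mathcal{M}_k$ is thick, and clearly $\mathcal{M}_1 \supset \mathcal{M}_2 \supset \cdots$. Because $\mathcal{M}_k$ is thick, the definition of thickness applied to the finite set $F_k$ yields a $g_k \in \Gamma$ with the block $B_k := F_k^{-1} g_k \subset \mathcal{M}_k$; note $g_k \in B_k$ since $e \in F_k$. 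I would then set $N := \bigcup_k B_k$, exactly paralleling the Chac\'{o}n construction of building $N$ out of larger and larger pieces of the thick sets $\mathcal{M}_k$.

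It then remains to verify the two required properties. Thickness of $N$ is immediate from the exhaustion: any finite $F \subset \Gamma$ satisfies $F \subset F_k$ for some $k$, whence $F^{-1} g_k \subset F_k^{-1} g_k = B_k \subset N$ with $g_k \in N$, so $N$ is thick by definition. For the mixing property, given nonempty open $U, V$ I would pick basis elements $W_i \subset U$ and $W_j \subset V$; then for every $k \ge m := \max(i,j)$ one has $B_k \subset \mathcal{M}_k \subset \mathcal{M}_m \subset N(W_i, W_j) \subset N(U,V)$, where the last inclusion holds because $\gamma W_i \cap W_j \subset \gamma U \cap V$. Taking $F := B_1 \cup \cdots \cup B_{m-1}$, a finite (hence compact, as $\Gamma$ is discrete) subset of $N$, gives $N \setminus F \subset \bigcup_{k \ge m} B_k \subset N(U,V)$, so $\gamma U \cap V \neq \emptyset$ for all $\gamma \in N \setminus F$, which is precisely mixing along $N$.

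The step I expect to require the most care is the passage from the $\Z$-specific picture, where thick sets are exactly those containing arbitrarily long intervals (Lemma~\ref{lem:thick}), to an arbitrary countable discrete group where no such interval structure is available. The correct substitute is to read thickness directly through its definition: a thick set must contain a translate $F_k^{-1} g_k$ of each finite set $F_k$, and these translated copies of the exhausting sets $F_k$ are exactly the blocks that make both the thickness of $N$ and the eventual containment $B_k \subset N(U,V)$ go through. Once this analogue of the interval blocks is identified, the rest of the argument is bookkeeping of the same flavor already seen in the Chac\'{o}n case.
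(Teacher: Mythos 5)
Your proposal is correct and follows essentially the same approach as the paper: both define the nested thick sets $\mathcal{M}_k$ from a countable basis via Lemma~\ref{lem:trans}, extract from each $\mathcal{M}_k$ a translated copy of an exhausting finite set $F_k^{-1}g_k$ (the paper writes this as $\{\delta_m, \gamma_1^{-1}\delta_m, \ldots, \gamma_m^{-1}\delta_m\}$, obtained from the equivalent intersection form of thickness), and take $N$ to be the union of these blocks. Your verification of thickness and of the mixing property of $N$ matches the paper's argument step for step.
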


\begin{proof}
As $X$ is a second countable space, we have a countable basis $\{U_i\}$ of open sets for the topology of $X$. Then, for each $i \in \N$, we define $C_i$ to be the finite set of open sets $$C_i = \{U_1, U_2, \ldots, U_i\}.$$ 
By definition, $C_1 \subset C_2 \subset C_3 \subset \ldots$. We let $$\mathcal{M}_n = \bigcap_{1 \leq i, j \leq n} N(U_i, U_j),$$ which is thick by the previous lemma, Lemma \ref{lem:trans}, as the system is $k$-transitive for all $k$. Furthermore, as $C_1 \subset C_2 \subset C_3 \subset \ldots$, we see that we have the inclusion $$\mathcal{M}_1 \supset \mathcal{M}_2 \supset \mathcal{M}_3 \supset \ldots.$$ Now, as $\Gamma$ is countable, we can enumerate the elements $\Gamma = \{\gamma_1, \gamma_2, \gamma_3, \ldots\}$. Since each $\mathcal{M}_m$ is thick, for each $m$ we have that $\mathcal{M}_m \cap \bigcap_{i=1}^m \gamma_i \mathcal{M}_m$ is non-empty. Thus, we can find an element $\delta_m$ in this intersection and define the finite set $$N_m = \{\delta_m, \gamma_1^{-1} \delta_m, \ldots, \gamma_m^{-1} \delta_m\} \subset \mathcal{M}_m.$$ We then let $N = \bigcup_{m=1}^\infty N_m$ and claim then that $N$ is thick and that $(X, N)$ is strong mixing. 

We begin by proving thickness. As $\Gamma$ is discrete, compact sets are finite sets. For any compact set $\{\gamma_{i_1}, \ldots, \gamma_{i_n}\} \subset \Gamma$, we have that $\{\gamma_{i_1}, \ldots, \gamma_{i_n}\} \subset \{\gamma_1, \ldots, \gamma_m\}$ for some $m$. Therefore, we have that 
$$\delta_m \in \left( N_m \cap \bigcap_{i=1}^m \gamma_i N_m \right) \subset \left( N \cap \bigcap_{i=1}^m \gamma_i N \right)\subset \left(N \cap \bigcap_{j=1}^n \gamma_{i_j} N\right),$$
showing that $N$ is thick. We also notice that for any two open sets $\tilde U$ and $\tilde V$ in $X$, there are $U \subset \tilde U$ and $V \subset \tilde V$ with $U, V \in C_m$ for all $m \geq n$ for some $n$. Then, for any $\gamma \in N \backslash \bigcup_{i=1}^{n-1} N_i$, we have that $\gamma \in N_k \subset \mathcal{M}_k$ for some $k \geq n$. So, $\gamma U \cap V \neq \emptyset$ and therefore $\gamma \tilde U \cap \tilde V$ is nonempty. Thus, $(X, \Gamma)$ is thick strong mixing. 
\end{proof}

Having established the equivalence of $k$-transitivity for all $k$ and thick strong mixing for countable, discrete groups, we now move on to the arbitrary locally compact, second countable groups. 

\subsection{Forward Implication for Locally Compact, Second Countable Groups}

We now move away from the realm of discrete countable groups and suppose that $\Gamma$ is just a second countable, locally compact topological group. Then, as $\Gamma$ is locally compact and Hausdorff, it is also a regular space and Urysohn's Lemma then gives us that $\Gamma$ is metrizable. Thus, we may assume that $\Gamma$ comes equipped with a metric. Furthermore, that $\Gamma$ is second countable implies that it is Lindeloff, and metrizable and Lindeloff together imply that $\Gamma$ is also separable. 

As $\Gamma$ is now not a discrete group, we need to modify our method slightly from the previous section. The idea is that we will again define the $C_i$ and find thick sets $\mathcal{M}_1 \supset \mathcal{M}_2 \supset \mathcal{M}_3 \supset \ldots$ for which $N(U,V) \supset \mathcal{M}_i$ for all $U, V \in C_i$. Then, we can find a countable dense set $\{\gamma_1, \gamma_2, \ldots\} \subset \Gamma$ and take advantage of the continuous property of the $\Gamma$-action, to construct our desired thick set $N$ by looking at neighborhoods of the $\gamma_i$. This way, we can again restrict our attention to a countable set. 

\begin{proposition}
\label{prop:asclc}
Let $X$ be a second countable topological space and $\Gamma$ a continuous action on $X$, where $\Gamma$ is second countable, and locally compact. Then, if $(X, \Gamma)$ is $k$-transitive for all $k$, it is also thick strong mixing. 
\end{proposition}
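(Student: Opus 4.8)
The plan is to follow the template of Proposition \ref{prop:cmc}, replacing the countable enumeration of $\Gamma$ and the finite test sets of the discrete case by an exhaustion of $\Gamma$ by compact sets, and bridging the gap with continuity of the action. As before, fix a countable basis $\{U_i\}$ of $X$, set $C_m = \{U_1,\dots,U_m\}$, and let $\mathcal{M}_m = \bigcap_{1\le i,j\le m} N(U_i,U_j)$, so that $\mathcal{M}_1 \supset \mathcal{M}_2 \supset \cdots$ and each $\mathcal{M}_m$ is thick by Lemma \ref{lem:trans}. Since $\Gamma$ is locally compact, Hausdorff and second countable it is $\sigma$-compact, so I fix an exhaustion $K_1 \subset \inte K_2 \subset K_2 \subset \cdots$ by compact sets with $e \in K_1$ and $\bigcup_m K_m = \Gamma$; every compact subset of $\Gamma$ then lies in some $K_m$.

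The main obstacle is that Lemma \ref{lem:trans} only guarantees that $\mathcal{M}_m$ meets $\bigcap_{\gamma\in F}\gamma\mathcal{M}_m$ for finite $F$, whereas to control $N$ against compact sets I need the same with a compact $K$ in place of $F$. I would prove this upgrade as the key lemma, using continuity to replace $K$ by finitely many points. The identity $\gamma \mathcal{M}_m = \bigcap_{i,j\le m} N(U_i,\gamma U_j)$ reduces the claim to finding $\delta \in \bigcap_{\gamma\in K,\, i,j\le m} N(U_i,\gamma U_j)$. For fixed $\gamma_0 \in K$ and $j \le m$, pick $x_0 \in U_j$ and apply continuity of $(\gamma,y)\mapsto \gamma^{-1}y$ at $(\gamma_0, \gamma_0 x_0)$ to produce a neighborhood $W$ of $\gamma_0$ and a nonempty open $O \subset X$ with $O \subset \gamma U_j$ for every $\gamma \in W$; intersecting over $j\le m$ gives a single neighborhood $W_{\gamma_0}$ of $\gamma_0$ together with open sets $O^{(j)}_{\gamma_0}$ such that $N(U_i, O^{(j)}_{\gamma_0}) \subset N(U_i,\gamma U_j)$ for all $\gamma \in W_{\gamma_0}$ and all $i,j\le m$. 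Covering $K$ by finitely many $W_{\gamma_1'},\dots,W_{\gamma_p'}$, the infinite intersection $\bigcap_{\gamma\in K}\gamma\mathcal{M}_m$ then contains the finite intersection $\bigcap_{l\le p}\bigcap_{i,j\le m} N(U_i, O^{(j)}_{\gamma_l'})$, which is thick, hence nonempty, again by Lemma \ref{lem:trans}. This continuity-and-compactness reduction of an uncountable intersection to a finite one is where the real work lies.

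With the upgraded thickness in hand, the assembly mirrors the discrete case. For each $m$ I choose $\delta_m \in \mathcal{M}_m \cap \bigcap_{\gamma\in K_m}\gamma\mathcal{M}_m$ and set $N_m = K_m^{-1}\delta_m$, a compact subset of $\mathcal{M}_m$ containing $\delta_m$ (as $e\in K_m$), and put $N = \bigcup_m N_m$. For thickness, given a compact $K$ choose $m$ with $K \subset K_m$; then $\delta_m \in N$ and $K^{-1}\delta_m \subset K_m^{-1}\delta_m = N_m \subset N$, so $\delta_m \in N \cap \bigcap_{\gamma\in K}\gamma N$. For strong mixing along $N$, given nonempty open $U,V$ choose basis elements $U_i\subset U$, $U_j\subset V$ and set $n=\max(i,j)$, so that $N(U,V)\supset N(U_i,U_j)\supset\mathcal{M}_n$; taking the compact set $F=\bigcup_{k<n}N_k\subset N$, the nesting $N_k\subset\mathcal{M}_k\subset\mathcal{M}_n$ for $k\ge n$ gives $N\setminus F\subset\mathcal{M}_n\subset N(U,V)$, i.e. $\gamma U\cap V\neq\emptyset$ for all $\gamma\in N\setminus F$. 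Hence $N$ is thick and $(X,N)$ is mixing, establishing thick strong mixing. I expect the only delicate point beyond the key lemma to be the routine verifications that $K_m^{-1}\delta_m$ is compact and contained in $\mathcal{M}_m$.
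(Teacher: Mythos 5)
Your proof is correct, and while it shares the paper's overall skeleton, its core technical step is genuinely different. Both arguments use the nested thick sets $\mathcal{M}_m=\bigcap_{1\le i,j\le m}N(U_i,U_j)$ supplied by Lemma \ref{lem:trans}, assemble $N$ as a countable union of compact pieces $N_m\subset\mathcal{M}_m$, and verify mixing identically (compact exceptional set $\bigcup_{k<n}N_k$). The paper, however, metrizes $\Gamma$ via Urysohn, fixes a countable basis of relatively compact balls $\mathcal{U}_1,\mathcal{U}_2,\ldots$, and manufactures each piece $N_i$ so that $N_i\cap\gamma_1N_i\cap\cdots\cap\gamma_iN_i\neq\emptyset$ for \emph{every} tuple $(\gamma_1,\ldots,\gamma_i)\in\mathcal{U}_1\times\cdots\times\mathcal{U}_i$: for each tuple in the compact product of closures it picks witnesses $m_0=\gamma_1m_1=\cdots=\gamma_im_i$ in $\mathcal{M}_i$, fattens them into balls stable under perturbation of the tuple, and extracts a finite subcover, so that $N_i$ is a finite union of closed balls; thickness of $N$ is then checked by locating an arbitrary finite subset of $\Gamma$ inside basis balls. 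You instead isolate one clean lemma---$\bigcap_{\gamma\in K}\gamma\mathcal{M}_m\neq\emptyset$ for every compact $K$---proved by covering $K$ with finitely many neighborhoods on which $\gamma U_j$ uniformly contains a fixed nonempty open set, reducing the uncountable intersection to a finite one that Lemma \ref{lem:trans} handles; then $N_m=K_m^{-1}\delta_m$ is a single translate of an exhaustion piece, the exact continuous analogue of the discrete construction $\{\delta_m,\gamma_1^{-1}\delta_m,\ldots,\gamma_m^{-1}\delta_m\}$ in Proposition \ref{prop:cmc}. Your route buys three things: it needs only $\sigma$-compactness of $\Gamma$ rather than metrizability; it yields the stronger conclusion that $N$ is thick against compact, not merely finite, test sets; and it concentrates all the continuity bookkeeping in one lemma about the open sets $\mathcal{M}_m$, whereas the witness-perturbation step is the delicate point of the paper's proof (as written there, producing the perturbed witnesses $\tilde m_j$ from the condition $\varphi(B_{\gamma_j}\times B_{m_j})\subset B_{m_0}$ also implicitly uses continuity of inversion---one should really take $\tilde m_j=\tilde\gamma_j^{-1}m_0$ and shrink $B_{\gamma_j}$ so that this lies in $B_{m_j}$). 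What the paper's construction buys in exchange is a piece $N_i$ whose defining property is stated uniformly over products of basis balls, which is exactly what its thickness check consumes; your exhaustion by the $K_m$ makes that check a one-liner instead.
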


\begin{proof}
As in the proof of Proposition \ref{prop:cmc}, we can let $C_i = \{U_1, \ldots, U_i\}$, where $\{U_i\}_{i=1}^\infty$ is a countable basis for the topology of $X$. Then, we have that $$\mathcal{M}_n = \bigcap_{1 \leq i, j \leq n} N(U_i, U_j)$$ is thick for all $n \in \N$. 

By the metrizability and separability of $\Gamma$, we can also find a countable basis of balls $\{\mathcal{U}_i = B(g_i, r_i)\}_{i \in \N}$ for $\Gamma$. By the local compactness of the space, the $\mathcal{U}_i$ may be chosen so that each $\overline{\mathcal{U}_i}$ is compact. Now the idea is that we want to find compact sets $N_i \subset \Gamma$ such that $$N_i \cap \gamma_1 N_i \cap \gamma_2 N_i \cap \ldots \cap \gamma_i N_i \neq \emptyset, \text{ for all }\gamma_j \in \mathcal{U}_j$$ for which $\mathcal{M}_i \supset N_i$. If we then took $N = N_1 \cup N_2 \cup \ldots$, we would have that $N$ is thick as for any $\gamma_1, \ldots, \gamma_i \in \Gamma$, we can find $j_1 < j_2 < \ldots < j_i$ for which $\gamma_1 \in \mathcal{U}_{j_1}, \ldots, \gamma_i \in \mathcal{U}_{j_i}$. Then, $$\left(N \cap \gamma_1 N \cap \ldots \cap \gamma_i N\right) \supset \left(N_{j_i} \cap \gamma_1 N_{j_i} \cap \ldots \cap \gamma_i N_{j_i}\right) \neq \emptyset,$$
and so we would have the thickness of $N$. Furthermore, for any two open sets $U, V$, we could find an $I$ such that for all $i \geq I$, there exist $U_I, V_I \in C_I \subset C_i$ such that $U \supset U_I, V \supset V_I$. Then, for all $i \geq I$, we have that $N_i \subset N(U,V)$. Hence, $\gamma U \cap  V \neq \emptyset$ for all $\gamma \in N \backslash F$ where $F = N_1 \cup \ldots \cup N_{I-1}$. But each $N_i$ was chosen to be compact so $F$ is compact, and we have the mixing condition on $N$. 

Now, we need to construct the sets $N_i$. To do this, we will take advantage of the structure and topology of $\Gamma$ as well as that $\Gamma$ acts continuously on $X$. 

By the thickness of each $\mathcal{M}_i$, for every $(\gamma_1, \gamma_2, \ldots, \gamma_i) \in \overline{\mathcal{U}_1} \times \overline{\mathcal{U}_2} \times \ldots \times \overline{\mathcal{U}_i}$, we can find $m_0, m_1, \ldots, m_i \in \mathcal{M}_i$ for which $$m_0 =  \gamma_1 m_1 = \ldots = \gamma_i m_i.$$


Around each $m_j, 0 \leq i \leq i$, we can find a ball $B_{m_j} := B(m_j, r_j) \subset \mathcal{M}_i,$ $r_j > 0$, with compact closure as the $\mathcal{M}_i$ are open by the continuity of the map $X \times \Gamma \rightarrow X$. Then, as $\Gamma$ is a topological group, the map $\varphi: \Gamma \times \Gamma \rightarrow \Gamma$ defined by $\varphi(\gamma_1, \gamma_2) = \gamma_1 \gamma_2$ is continuous, so around each $\gamma_j$, we can find a nonempty open ball $B_{\gamma_j}$ such that $\varphi(B_{\gamma_j} \times B_{m_j}) \subset B_{m_0}$ for each $j$, $1 \leq j \leq i$. 

Now, if we let 
\begin{align*} 
B_{(\gamma_1, \ldots, \gamma_i)} & = \overline{B_{m_0}} \cup \ldots \cup \overline{B_{m_i}} \\
N_{(\gamma_1, \ldots, \gamma_i)} & = B_{\gamma_1} \times \ldots \times B_{\gamma_i},
\end{align*} 
we have that for any $(\tilde \gamma_1, \ldots, \tilde \gamma_i) \in N_{(\gamma_1, \ldots, \gamma_i)}$, there exists $\tilde m_0, \ldots, \tilde m_i \in B_{(\gamma_1, \ldots, \gamma_i)}$ such that $$\tilde m_0 = \tilde \gamma_1 \tilde m_1 = \ldots = \tilde \gamma_i \tilde m_i.$$
Then, the sets $N_{\gamma_1, \ldots, \gamma_i}$ ranging over all $(\gamma_1, \ldots, \gamma_i)$ in $\overline{U_1} \times \ldots \times \overline{U_i}$ form an open cover. Since $\overline{\mathcal{U}_1} \times \ldots \overline {\mathcal{U}_i}$ is compact, we can then find a finite subcover of this space. Letting $N_i$ be the finite union of $B_{(\gamma_1, \ldots, \gamma_i)}$ for the $(\gamma_1, \ldots, \gamma_i)$ that generated the open sets in this cover, we have that $N_i$ is compact, thick, and in $\mathcal{M}_i$, which is what we wanted. 
\end{proof}


\subsection{Tying it Together: the Reverse Implication}

As of this point, we have established that given sufficient conditions on a system $(X, \Gamma)$, $k$-transitivity for all $k$ then implies that $(X, \Gamma)$ is thick strong mixing. We wonder then if this implication is actually an equivalence. That is, given a thick strong mixing system, is it then $k$-transitive for all $k$? We answer this question in the affirmative in this section (again after assuming sufficient initial conditions). 

Before we state our proposition, we note that if $\Gamma$ is a compact group, then $(X, \Gamma)$ is trivially thick strong mixing as for any open $U, V \subset X$, we have that for all $\gamma \in \Gamma \backslash \Gamma = \emptyset$, $\gamma U \cap V \neq \emptyset$. Therefore, we want to exclude compact groups $\Gamma$ from consideration. Doing so, we will have a reverse implication. We first start with a lemma. 

\begin{lemma} 
\label{lem:notthick}
If $C$ is a compact, proper subset of $\Gamma$ a topological group, then $C$ does not contain a thick subset of $\Gamma$. 
\end{lemma}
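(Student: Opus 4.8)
The plan is to argue by contradiction. Suppose that $N \subset C$ is thick, where $C$ is compact and $C \neq \Gamma$. I would show that thickness of $N$ forces the family of left translates $\{gC : g \in \Gamma\}$ to have the finite intersection property, and then use compactness of $C$ to produce a point whose entire $\Gamma$-orbit lies in $C$, contradicting $C \neq \Gamma$.

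First I would unwind the definition of thickness into a statement about $C$ alone. Given any finite set $F = \{g_1, \dots, g_n\} \subset \Gamma$, thickness of $N$ supplies a point $\gamma_0 \in N \cap \bigcap_{i=1}^n g_i N$. For each $i$ this means $g_i^{-1}\gamma_0 \in N \subset C$, i.e. $\gamma_0 \in g_i C$; combined with $\gamma_0 \in N \subset C = eC$, this gives $\gamma_0 \in C \cap \bigcap_{i=1}^n g_i C$. Since $F$ was an arbitrary finite set, every finite subfamily of $\{gC : g \in \Gamma\}$ has nonempty intersection.

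Next I would invoke compactness. Since $\Gamma$ is a topological group, each left translation $h \mapsto gh$ is a homeomorphism, so every $gC$ is compact, and hence closed because $\Gamma$ is Hausdorff. Regarding the sets $gC \cap C$ as closed subsets of the compact space $C$, the finite intersection property established above yields a point $x \in \bigcap_{g \in \Gamma} gC$. For this $x$ we have $g^{-1}x \in C$ for every $g \in \Gamma$; as $g$ ranges over $\Gamma$ the element $g^{-1}x$ ranges over all of $\Gamma$, so $\Gamma \subset C$, contradicting that $C$ is a proper subset. (Note that this argument never uses non-compactness of $\Gamma$, only that $C$ is a proper compact subset.)

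The only real subtlety, and the step I would be most careful about, is the bookkeeping in the second paragraph: keeping the translation directions consistent so that the thickness condition genuinely becomes the finite intersection property for the translates $gC$ (rather than $g^{-1}C$), and observing that the $gC$ need not lie in a single fixed compact set, which is why one intersects each with $C$ before applying the finite intersection property. Once this is set up correctly, the compactness step is routine.
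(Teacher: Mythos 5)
Your proof is correct, but it follows a genuinely different route from the paper's. The paper argues directly: it fixes a point $u$ in the open complement $U = \Gamma \setminus C$, covers the compact set $C$ by the open sets $\gamma_c^{-1}U$ where $\gamma_c = uc^{-1}$, extracts a finite subcover $\{\gamma_i^{-1}U\}_{i=1}^n$, and concludes that $C \cap \bigcap_{i=1}^n \gamma_i^{-1}C = \emptyset$; this single finite set of group elements then witnesses the failure of the thickness intersection condition simultaneously for every subset of $C$. You instead argue by contradiction via the dual formulation of compactness: thickness of $N \subset C$ yields the finite intersection property for the closed sets $gC \cap C$ inside the compact space $C$, so there is a point $x \in \bigcap_{g \in \Gamma} gC$, and surjectivity of the map $g \mapsto g^{-1}x$ forces $\Gamma \subset C$, contradicting properness. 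The two arguments are dual uses of compactness (open covers versus the finite intersection property for closed sets), and both invoke Hausdorffness and properness, just at opposite ends of the argument. What the paper's version buys is an explicit finite witness set $\{\gamma_1, \ldots, \gamma_n\}$ demonstrating that no subset of $C$ can satisfy the defining condition of thickness; what yours buys is a clean structural dichotomy (all translates of a compact set share a common point only if the set is the whole group) with no need to manipulate the complement. The two subtleties you flag --- keeping the translation directions consistent so that $\gamma_0 \in g_i N \subset g_i C$, and intersecting each translate with $C$ before applying the finite intersection property --- are exactly the right places to be careful, and you handle both correctly.
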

\begin{proof} As $C$ is compact and $\Gamma$ is Hausdorff, $C$ is also closed in $\Gamma$. Since $C$ is a proper subset of $\Gamma$, letting $U = \Gamma / C$, we have that $U$ is open and that there exists some element $u \in U$. 

For every $c \in C$, there exists a group element $\gamma_c = uc^{-1}$ such that $\gamma_c c = u$. By continuity, $\gamma_c^{-1} U$ is open, and contains $c$. Then, $\{\gamma_c^{-1} U\}_{c \in C}$ is an open cover of $C$ and therefore has a finite subcover $\{U_i = \gamma_i^{-1} U\}_{i=1}^n$. 

But now, as the action of each $\gamma : \Gamma \rightarrow \Gamma$ is bijective, we know that $\gamma_i^{-1} C \cap U_i = \emptyset$, as $U_i$ is contained in the image of $U$ under $\gamma_i^{-1}$, and $U \cap C = \emptyset$. Thus, we have that 
$$C \cap \bigcap_{i=1}^n \gamma_i^{-1} C = C \cap  \left( \bigcup_{i=1}^n U_i \right) \cap \left( \bigcap_{i=1}^n \gamma_i^{-1} C \right) = \emptyset.$$ Hence, $C$ cannot contain a thick subset of $\Gamma$. 
\end{proof}

Now, we have the tools that we need to prove the reverse implication. 

\begin{proposition} 
\label{prop:forward} 
If $(X, \Gamma)$ is thick strong mixing where $X$ is a second countable space and $\Gamma$ is second countable and locally compact but not compact, then $(X, \Gamma)$ is a $k$-transitive for all $k \in \N$. 
\end{proposition}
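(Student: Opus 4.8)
The plan is to unwind the definitions and reduce $k$-transitivity to a single nonemptiness statement that Lemma~\ref{lem:notthick} settles directly. Recall (as noted after Lemma~\ref{lem:trans}) that $k$-transitivity for all $k$ amounts to requiring, for every finite collection of nonempty open sets $U_1, \ldots, U_k, V_1, \ldots, V_k \subset X$, that $\bigcap_{i=1}^k N(U_i, V_i) \neq \emptyset$. So I would fix such a collection; the task is to produce a single $\gamma \in \Gamma$ with $\gamma U_i \cap V_i \neq \emptyset$ for every $i$.

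First I would invoke the hypothesis of thick strong mixing: there is a fixed thick set $N \subset \Gamma$ on which $(X, N)$ is mixing. Applying the mixing condition to each pair $(U_i, V_i)$ produces a compact set $F_i \subset N$ with $\gamma U_i \cap V_i \neq \emptyset$ for all $\gamma \in N \backslash F_i$; equivalently $N \backslash F_i \subset N(U_i, V_i)$. Setting $F = \bigcup_{i=1}^k F_i$, a finite union of compact sets and hence compact, we obtain
$$ N \backslash F = \bigcap_{i=1}^k (N \backslash F_i) \subset \bigcap_{i=1}^k N(U_i, V_i). $$
Thus it suffices to show that $N \backslash F$ is nonempty, since any $\gamma$ in it satisfies $\gamma U_i \cap V_i \neq \emptyset$ simultaneously for all $i$.

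This is exactly where Lemma~\ref{lem:notthick} does the work, and it is the only place the noncompactness of $\Gamma$ enters. Since $\Gamma$ is not compact while $F$ is compact, $F$ is a proper subset of $\Gamma$. If $N \backslash F$ were empty, then $N \subset F$, so the compact proper set $F$ would contain the thick set $N$, contradicting Lemma~\ref{lem:notthick}. Hence $N \backslash F \neq \emptyset$, and any $\gamma$ in it lies in $\bigcap_{i=1}^k N(U_i, V_i)$, proving this intersection is nonempty. As $k$ and the open sets were arbitrary, $(X, \Gamma)$ is $k$-transitive for every $k \in \N$.

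I expect no serious obstacle here: the argument is a short bookkeeping reduction followed by one clean contradiction. The only points requiring care are confirming that $F$ is compact (a finite union of compact sets) and proper (using that $\Gamma$ is noncompact), so that Lemma~\ref{lem:notthick} genuinely applies; everything else is a formal consequence of the definitions of thick strong mixing and of $N(U_i, V_i)$.
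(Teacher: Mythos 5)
Your proof is correct and follows essentially the same route as the paper: apply the mixing condition on the thick set $N$ to each pair $(U_i, V_i)$, take the union $F$ of the compact exceptional sets, and use Lemma~\ref{lem:notthick} together with the noncompactness of $\Gamma$ to produce an element of $N \backslash F$ lying in $\bigcap_{i=1}^k N(U_i, V_i)$. If anything, your final step is tighter than the paper's, which deduces from Lemma~\ref{lem:notthick} only that $N$ is noncompact and then concludes $N \backslash (C_1 \cup \ldots \cup C_k) \neq \emptyset$; your direct application of the lemma to the compact proper set $F$ avoids the (generally invalid) inference that a noncompact set cannot be contained in a compact one.
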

\begin{proof} We suppose that $(X, \Gamma)$ satisfies the conditions in the statement of the proposition. Then, there exists a thick $N \subset \Gamma$ on which we have strong mixing. From Lemma \ref{lem:notthick}, we know that $N$ cannot be compact. We take any $k$ pairs of nonempty open sets $U_i$ and $V_i$ in $X$. By the thick strong mixing property, for each $i$ there exists a compact $C_i \subset N$ such that $\gamma U_i \cap V_i \neq \emptyset$ on $N \backslash C_i$. This implies that $\gamma U_i \cap V_i \neq \emptyset$ for all $i$ on $N \backslash (C_1 \cup \ldots \cup C_k)$. But $C_1 \cup \ldots \cup C_k$ is compact and $N$ is not compact, so $N \backslash (C_1 \cup \ldots \cup C_k) \neq \emptyset$ and there exists a $\gamma \in N \subset \Gamma$ such that $\gamma U_i \cap V_i \neq \emptyset$ for all $i, 1 \leq i \leq k$, implying $k$-transitivity. 
\end{proof}

Now, we have both the forward and reverse implication in the equivalence of $k$-transitivity for all $k$ and thick strong mixing. If we include in the result coming from Lemma \ref{lem:trans}, we get the main theorem, Theorem \ref{thm:main}.

\subsection{Consequences of the Main Theorem}

We conclude this section with some consequences of the main theorem, Theorem \ref{thm:main}. We recall that our motivating problem was to find when a system being weak mixing then implies that it is thick strong mixing. Now, we can resolve this problem for abelian groups. It is well known that for actions of abelian groups, weak mixing implies $k$-transitivity for all $k$. Then, by Theorem \ref{thm:main}, the equivalence of $k$-transitivity for all $k$ and thick strong mixing, we have the following result. 

\begin{corollary}
\label{cor:abelian}
If $(X, \Gamma)$ is a system such that $X$ is second countable and $\Gamma$ is locally compact but not compact, second countable, and abelian, then weak mixing is equivalent to thick strong mixing. 
\end{corollary}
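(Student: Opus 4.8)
The plan is to derive the corollary directly from the main theorem, Theorem~\ref{thm:main}, together with the single classical input that for abelian group actions weak mixing upgrades to $k$-transitivity for all $k$. Recall that weak mixing is exactly $2$-transitivity, so one direction is essentially free: if $(X,\Gamma)$ is thick strong mixing then by Theorem~\ref{thm:main}, $(\mathrm{ii})\Rightarrow(\mathrm{i})$, it is $k$-transitive for every $k$, and taking $k=2$ gives weak mixing. The substance of the corollary therefore lies in the forward direction, and this is the only place the abelian hypothesis on $\Gamma$ is used.

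For the forward direction I would first establish the Furstenberg-style refinement lemma: if $(X,\Gamma)$ is weak mixing and $\Gamma$ is abelian, then for any nonempty open $U_1,U_2,V_1,V_2$ there are nonempty open $U',V'$ with $N(U',V')\subseteq N(U_1,V_1)\cap N(U_2,V_2)$. To prove it, apply weak mixing to obtain $\gamma_0$ with $\gamma_0U_1\cap U_2\neq\emptyset$ and $\gamma_0V_1\cap V_2\neq\emptyset$, and set $U'=U_1\cap\gamma_0^{-1}U_2$ and $V'=V_1\cap\gamma_0^{-1}V_2$, both nonempty and open. For $\gamma\in N(U',V')$, pick $x\in U'$ with $\gamma x\in V'$; then $x\in U_1$ and $\gamma x\in V_1$ give $\gamma\in N(U_1,V_1)$, while $\gamma_0x\in U_2$ and $\gamma_0\gamma x\in V_2$ together with the identity $\gamma\gamma_0=\gamma_0\gamma$ give $\gamma(\gamma_0x)=\gamma_0\gamma x\in V_2$, so $\gamma\in N(U_2,V_2)$. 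Commutativity is used precisely at this last step, and it is exactly here that the argument (and hence the corollary) must fail for the nonabelian examples promised later in the paper. I would then iterate the lemma: given $U_1,\dots,U_k,V_1,\dots,V_k$, repeatedly fuse two of the sets $N(U_i,V_i)$ into a single $N(U',V')$ inside their intersection, reducing to one $N(\widehat U,\widehat V)\subseteq\bigcap_{i=1}^k N(U_i,V_i)$; transitivity (a consequence of weak mixing) makes $N(\widehat U,\widehat V)$ nonempty, so the whole intersection is nonempty and $(X,\Gamma)$ is $k$-transitive. Feeding this into Theorem~\ref{thm:main}, $(\mathrm{i})\Rightarrow(\mathrm{ii})$, yields thick strong mixing.

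The main point to watch, rather than a genuine obstacle, is that the refinement lemma must be available in the full locally compact, second countable generality of the hypotheses, whereas the cited characterization Theorem~\ref{thm:tdt} is stated only for discrete countable abelian $\Gamma$. Fortunately the argument above never invokes discreteness, countability, or any metric structure on $\Gamma$; it uses only the group axioms, commutativity, and the openness of $U'$ and $V'$, so it applies verbatim to lcsc abelian groups. With that observation in hand the corollary is simply the conjunction of the two implications of Theorem~\ref{thm:main}, and nothing further about the topology of $\Gamma$ is needed beyond what that theorem already assumes.
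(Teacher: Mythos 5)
Your proposal is correct and takes essentially the same route as the paper: the corollary is obtained by combining Theorem~\ref{thm:main} with the classical fact that for abelian group actions weak mixing implies $k$-transitivity for all $k$ --- the paper simply cites this fact as well known, whereas you supply its proof via the standard Furstenberg-style refinement lemma and iteration. Your observation that this refinement argument uses only commutativity and openness, and hence holds for locally compact second countable abelian groups rather than just the discrete countable case of Theorem~\ref{thm:tdt}, is a useful point that the paper leaves implicit.
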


From this result, we can immediately deduce that the Chac\'{o}n system of Section \ref{sec:chacon} is thick strong mixing. We also see that this corollary gives that for every $\Z$-action on a second countable space $X$, weak mixing is equivalent thick strong mixing. \\

\begin{remark} We recall that the problem of when is a weak mixing system thick strong mixing was motivated by the result in ergodic theory that given sufficient initial conditions, a weakly mixing transformation is strong mixing on a dense subset of $\N$. Our result that $\Z$-actions on a second countable space satisfy that weak mixing implies thick strong mixing can then be thought of as an analogue of this result, adding to the list of parallels between topological dynamics and ergodic theory.
\end{remark}



\section{Nonabelian Examples}

In this section, we will look at some examples of nonabelian group actions that are weakly mixing. By looking at whether each system is $k$-transitive for all $k$, we will be able to determine whether each system satisfies the thick strong mixing property. In particular, we will construct both nonabelian group actions that are and are not thick strong mixing, showing that weak mixing does not always imply thick strong mixing in the case of nonabelian groups.

\subsection{$SL(2, \Z)$ Acting on the Torus, $\R^2 / \Z^2$}

We consider the following example. We let $\T^2 = \R^2 / \Z^2$, which is isomorphic to the torus, and let our acting group be $SL(2 , \Z)$. As $SL(2, \Z)$ is generated by the matrices $$\begin{bmatrix} 1 & 1 \\ 0 & 1 \end{bmatrix} \text{ and } \begin{bmatrix} 0 & 1 \\ -1 & 0 \end{bmatrix},$$ and each of these matrices can be seen to induce an automorphism on the torus (where the action on $\R^2$ is by matrix multiplication), this is a well-defined action. Furthermore, we have the following. 

\begin{proposition} $(\T^2, SL(2, \Z))$ is weakly mixing but not mixing.
\end{proposition}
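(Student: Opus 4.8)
The plan is to prove the two assertions separately: weak mixing via $k$-transitivity and the main theorem, and the failure of strong mixing via an explicit obstruction. For weak mixing, by Theorem~\ref{thm:main} and Corollary~\ref{cor:abelian}'s underlying logic it would suffice to show $(\T^2, SL(2,\Z))$ is $2$-transitive, i.e.\ weakly mixing in the direct sense; but since $SL(2,\Z)$ is nonabelian, Corollary~\ref{cor:abelian} does not apply and I must verify weak mixing directly from the definition. So first I would fix four nonempty open sets $U_1, U_2, V_1, V_2 \subset \T^2$ and exhibit a single $\gamma \in SL(2,\Z)$ with $\gamma U_1 \cap V_1 \neq \emptyset$ and $\gamma U_2 \cap V_2 \neq \emptyset$. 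The natural tool is the hyperbolic generator $A = \begin{bmatrix} 1 & 1 \\ 0 & 1 \end{bmatrix}$, or better a genuinely hyperbolic element such as $A^2$ composed with the rotation, which is a hyperbolic toral automorphism. Hyperbolic toral automorphisms are topologically mixing as single transformations, hence their powers $\{A^n\}$ give a weakly (in fact strongly) mixing $\Z$-subaction; I would then observe that weak mixing for this cyclic subgroup immediately yields weak mixing for the full $SL(2,\Z)$ action, since the relevant $\gamma$ can be taken from the subgroup.

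\textbf{Concretely}, I would first reduce to the subgroup generated by a hyperbolic element $M \in SL(2,\Z)$ with eigenvalues $\lambda, \lambda^{-1}$, $|\lambda| > 1$ (for instance $M = A \cdot (SA) = \begin{bmatrix} 1 & 0 \\ 1 & 1 \end{bmatrix}\begin{bmatrix}1&1\\0&1\end{bmatrix}$ or any $\mathrm{tr} > 2$ product of the generators). The key fact is that $M$ acting on $\T^2$ is topologically mixing: for any nonempty open $U, V$, one has $M^n U \cap V \neq \emptyset$ for all large $n$. This follows from the expanding/contracting foliation picture—$M^n U$ stretches exponentially along the unstable eigendirection of irrational slope, which equidistributes and becomes dense in $\T^2$. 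Topological mixing of the $\Z$-action $\{M^n\}$ gives in particular that for the four chosen open sets there is a common large $n$ with $M^n U_1 \cap V_1 \neq \emptyset$ and $M^n U_2 \cap V_2 \neq \emptyset$, establishing weak mixing of $(\T^2, SL(2,\Z))$.

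\textbf{For the failure of mixing}, I would argue that the full group action cannot be (topologically) strong mixing. By definition, strong mixing requires that for each pair $U, V$ there is a \emph{compact}, hence (since $SL(2,\Z)$ is discrete) \emph{finite} set $F \subset SL(2,\Z)$ with $\gamma U \cap V \neq \emptyset$ for all $\gamma \notin F$. The obstruction is that $SL(2,\Z)$ contains the elliptic element $S = \begin{bmatrix} 0 & 1 \\ -1 & 0 \end{bmatrix}$ of order $4$, and more generally elements of finite order and parabolic/unipotent elements that fail to mix: the powers $A^n = \begin{bmatrix} 1 & n \\ 0 & 1 \end{bmatrix}$ are \emph{not} mixing, since they fix the horizontal foliation and a small ball around a point on a horizontal circle can be kept disjoint from a suitably chosen target $V$ for infinitely many $n$. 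Thus I would exhibit nonempty open $U, V$ and an infinite family $\{A^{n_j}\}$ with $A^{n_j} U \cap V = \emptyset$, contradicting the finiteness of $F$.

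\textbf{The main obstacle} I expect is pinning down the non-mixing half rigorously: I must produce open sets $U, V$ together with an \emph{infinite} subset of $SL(2,\Z)$ on which $\gamma U \cap V = \emptyset$, since strong mixing only excludes a compact (finite) exceptional set. The parabolic subgroup $\{A^n : n \in \Z\}$ preserves the vertical direction: $A^n$ maps the horizontal displacement without changing the second coordinate, so choosing $U$ and $V$ as thin horizontal strips around distinct values of the $y$-coordinate forces $A^n U \cap V = \emptyset$ for \emph{all} $n \in \Z$, an infinite set. This cleanly breaks strong mixing. The subtlety is only in the bookkeeping of which coordinate is preserved and confirming the strips stay disjoint under the whole parabolic family; once the invariant foliation is identified, the disjointness is immediate and the argument closes.
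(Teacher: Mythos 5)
Your proposal is correct, but the weak-mixing half takes a genuinely different route from the paper. For the failure of mixing, your argument is essentially the paper's: both use the parabolic family $\begin{bmatrix} 1 & n \\ 0 & 1 \end{bmatrix}$, which preserves the second coordinate, so any two open sets contained in disjoint horizontal strips can never be brought to intersect, producing an infinite (hence non-compact) exceptional set. For weak mixing, however, the paper gives a self-contained elementary construction: it shrinks the four open sets to rectangles, applies a vertical shear $\begin{bmatrix} 1 & 0 \\ c & 1 \end{bmatrix}$ with $c$ so large that the sheared rectangles wrap all the way around the torus in one coordinate, then composes with a horizontal shear $\begin{bmatrix} 1 & d \\ 0 & 1 \end{bmatrix}$ to wrap around in the other; the resulting single matrix $M$ meets both pairs simultaneously. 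You instead choose a hyperbolic element $M$ of trace $> 2$ and invoke the classical fact that hyperbolic toral automorphisms are topologically mixing, then take a common large power. Both are valid. The paper's approach costs nothing beyond arithmetic and, importantly, scales verbatim to the $k$-transitivity-for-all-$k$ proof in the next proposition; your approach imports Anosov machinery but buys more in return — topological mixing of the $\Z$-subaction generated by $M$ immediately yields $k$-transitivity for every $k$ (take the maximum of the finitely many thresholds), hence thick strong mixing via Theorem \ref{thm:main}, with no additional construction. If you pursue your route, you should either cite the mixing of hyperbolic toral automorphisms or flesh out the sketch: the key points are that the unstable eigendirection of an integer matrix with $|\mathrm{tr}| > 2$ has irrational slope, and that sufficiently long segments of irrational slope are $\epsilon$-dense in $\T^2$. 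Two minor slips worth fixing: your opening paragraph flirts with using $A = \begin{bmatrix} 1 & 1 \\ 0 & 1 \end{bmatrix}$ itself, which is parabolic and not mixing (indeed that is exactly what your second half exploits), and the elliptic element $S$ of order $4$ is irrelevant to the failure of mixing, since finite-order elements can always be absorbed into the finite exceptional set; the parabolic family is the real obstruction, as your final paragraph correctly identifies.
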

\begin{proof}

We first show that $(\T^2, SL(2, \Z))$ is weak mixing. We suppose that we have open sets $U_1, U_2, U_3, U_4 \subset \T^2$. Then, if we represent the torus by its fundamental domain $[0,1) \times [0,1)$ in $\R^2$, we have that there exist nonempty $V_i = (a_i, b_i) \times (c_i, d_i) \subset U_i$ for each $i$. 

Then, for $c \in \Z$ such that $c > 1/(b_1 - a_1)$ and $c > 1/(b_3 -a_3)$, we have that $(c \cdot a_i, c \cdot b_i)$ covers $[0,1) \pmod{1}$ for $i=1,3$. Thus, there exist blocks $$V_1^\prime = (a_1^\prime, b_1^\prime) \times (c_1^\prime, d_1^\prime) \subset \left(\begin{bmatrix} 1 & 0 \\ c & 1 \end{bmatrix} V_1\right) \cap \left((a_1, b_1) \times (c_2, d_2)\right),$$
and similarly for $V_3^\prime$. This corresponds to a vertical shearing of the set. Then, choosing $d > 1/\min\{(d_1^\prime - c_1^\prime), (d_3^\prime - c_3^\prime)\}$, we have that for $$M = \begin{bmatrix} 1 & d \\ 0 & 1 \end{bmatrix} \begin{bmatrix} 1 & 0 \\ c & 1 \end{bmatrix},$$ that $M V_1 \cap  V_2 \neq \emptyset$ and $M V_3 \cap  V_4 \neq \emptyset$. Thus, $(\T^2, SL(2, \Z))$ is weak mixing. 

However, $(\T^2, SL(2, \Z))$ is not mixing. Again representing the torus by the fundamental domain $[0,1) \times [0,1)$, we consider the sets $V = (1/5, 2/5) \times (1/5, 2/5)$ and $U = (3/5, 4/5) \times (3/5, 4/5)$. Then, if we let $$g_a = \begin{bmatrix} 1 & a \\ 0 & 1 \end{bmatrix},$$ where $a \in \Z$, then each $g_a \in SL(2, \Z)$. Furthermore, $g_a U \subset [0,1) \times (1/5, 2/5)$ for all $a$. Thus, $g_a U \cap V = \emptyset$ for all $a \in \Z$. As this is an infinite set, it follows then that there is no compact (which implies finite) set $F \subset SL(2, \Z)$ for which $\gamma U \cap V \neq \emptyset$ for all $\gamma \in SL(2, \Z) \backslash F$, so $(\T^2, SL(2, \Z))$ is not mixing. 
\end{proof}

We turn our attention now to the question that we are interested in, whether there is a thick subset of $SL(2, \Z)$ on which the system is thick strong mixing. It suffices now to check whether $(\T^2, SL(2, \Z))$ is $k$-transitive for every $k$. This is an adaptation of the weak mixing proof from the previous proposition. 

\begin{proposition} Given $(\T^2, SL(2, \Z))$, the $k$-fold product system is topologically transitive for each $k \in \N$ and so $(\T^2, SL(2, \Z))$ is thick strong mixing. 
\end{proposition}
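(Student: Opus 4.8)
The plan is to adapt the weak mixing argument of the previous proposition so that it handles $k$ pairs of open sets at once, and then to invoke Theorem \ref{thm:main}. Since $SL(2,\Z)$ is a countable, infinite, discrete group — hence locally compact, second countable, and not compact — and $\T^2$ is compact and metrizable, hence second countable, it suffices to verify that $(\T^2, SL(2,\Z))$ is $k$-transitive for every $k$; the implication (i) $\Rightarrow$ (ii) of Theorem \ref{thm:main} then delivers thick strong mixing. So I reduce the statement to producing, for any nonempty open $U_1,\ldots,U_k,V_1,\ldots,V_k$, a single $M \in SL(2,\Z)$ with $M U_i \cap V_i \neq \emptyset$ for all $i$, i.e.\ an element of $\bigcap_i N(U_i,V_i)$.

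As in the $k=2$ case, I first shrink each set to an axis-aligned box inside the fundamental domain $[0,1)\times[0,1)$, say $(a_i,b_i)\times(c_i,d_i)\subset U_i$ and $(a_i',b_i')\times(c_i',d_i')\subset V_i$. The argument then proceeds in two shearing stages, with the only new feature being that the two integer shear parameters must be chosen uniformly in $i$. In the first (vertical) stage I choose an integer $c$ with $c(b_i-a_i)>1$ for every $i$, so that $cx \bmod 1$ sweeps out all of $[0,1)$ as $x$ runs over each source $x$-interval $(a_i,b_i)$. Then for each $i$ the image $\begin{bmatrix} 1 & 0 \\ c & 1 \end{bmatrix}\bigl((a_i,b_i)\times(c_i,d_i)\bigr)$ meets the strip $(a_i,b_i)\times(c_i',d_i')$ in a nonempty open region, from which I extract an axis-aligned box $V_i'$ whose $y$-interval lies inside the target interval $(c_i',d_i')$.

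In the second (horizontal) stage I use that $\begin{bmatrix} 1 & d \\ 0 & 1\end{bmatrix}$ fixes the $y$-coordinate and translates $x$ by $dy$, and I choose an integer $d$ large enough relative to the $y$-heights of all the intermediate boxes $V_i'$ that $dy \bmod 1$ sweeps out all of $[0,1)$ over each $V_i'$. Then the image of $V_i'$ meets $V_i$ in its $x$-interval $(a_i',b_i')$ while keeping the $y$-coordinate inside $(c_i',d_i')$, so setting $M=\begin{bmatrix} 1 & d \\ 0 & 1 \end{bmatrix}\begin{bmatrix} 1 & 0 \\ c & 1\end{bmatrix}\in SL(2,\Z)$ gives $M U_i \supset M\bigl((a_i,b_i)\times(c_i,d_i)\bigr)$, which contains the sheared copy of $V_i'$ and hence meets $V_i$. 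Thus $M\in\bigcap_i N(U_i,V_i)$ and the system is $k$-transitive.

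The routine parts are the box reductions and the verification that a sheared box meets the prescribed strip, which are identical in spirit to the work already done for $k=2$. The one point demanding genuine care — and the main obstacle — is the simultaneity: I must confirm that a single integer $c$ and a single integer $d$ suffice for all $k$ pairs at once. This is ultimately a finite maximum, since each pair imposes only a lower bound on $c$, and then, once $c$ and all the intermediate boxes $V_i'$ are fixed, a lower bound on $d$; the integers contain arbitrarily large values, so both bounds can be met. The structural subtlety is simply that the choice of $d$ must be deferred until after $c$ has determined the finitely many boxes $V_i'$, so that all the height conditions governing $d$ are available together.
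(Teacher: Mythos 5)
Your proof is correct and takes essentially the same approach as the paper: the same two-stage shear argument, with a single integer $c$ chosen via a finite minimum over all $k$ source widths, intermediate boxes $V_i'$ extracted from the vertically sheared images, a single integer $d$ chosen afterwards from the heights of the $V_i'$, and then Theorem \ref{thm:main} applied to pass from $k$-transitivity to thick strong mixing. The only (welcome) addition is that you explicitly verify the hypotheses of Theorem \ref{thm:main} for $SL(2,\Z)$ and $\T^2$, which the paper leaves implicit.
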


\begin{proof}
We suppose that we have $U_1, \ldots, U_{2k}$, open subsets of $\T^2$. Then, representing $\T^2$ by its fundamental domain $[0,1) \times [0,1)$, we can find nonempty sets $$V_i = (a_i, b_i) \times (c_i, d_i) \subset U_i$$ for each $i, 1\leq i \leq 2n$. Taking $c  > 1 / \min\{(b_{1}  - a_1, b_3 - a_3, \ldots, b_{2n-1} - a_{2n-1}\}, c \in \Z$, we have that $(c \cdot a_i, c \cdots b_i)$ covers $[0,1) \pmod{1}$ for each $i = 1, 3, \ldots, 2k-1$, so there exist blocks  $$V_{2i-1}^\prime = (a_{2i-1}^\prime, b_{2i-1}^\prime) \times (c_{2i-1}^\prime, d_{2i-1}^\prime) \subset \left(\begin{bmatrix} 1 & 0 \\ c & 1 \end{bmatrix} V_{2i}\right) \cap \left((a_{2i-1}, b_{2i-1}) \times (c_{2i}, d_{2i})\right),$$
for each $i, 1 \leq i \leq k$. Then, choosing $d > 1/\min_{1 \leq i \leq k} \{d_{2i-1}^\prime - c_{2i-1}^\prime\}$, we have that $(d \cdot c^\prime_{2i-1}, d \cdot d^{\prime}_{2i-1})$ covers $[0,1) \pmod{1}$ for each $i, 1 \leq i \leq k$. Thus, we have that for $$M = \begin{bmatrix} 1 & d \\ 0 & 1 \end{bmatrix} \begin{bmatrix} 1 & 0 \\ c & 1 \end{bmatrix},$$ that $M V_{2i-1} \cap V_{2i} \neq \emptyset$ for each $i, 1 \leq i \leq k$, showing that $$N(U_1 \times \ldots \times U_{k}, U_{k+1} \times \ldots \times U_{2k}) \neq \emptyset,$$ giving us topological transitivity of $(X^k, \Gamma)$. 

Now, by Theorem \ref{thm:main}, we have that $(\T^2, SL(2, \Z))$ is thick strong mixing. 
\end{proof}

This gives us an example of a system that is weak mixing and thick strong mixing, but not strong mixing. We also want to check if there exist systems that are weak mixing but not thick strong mixing. In the setting of abelian group actions, this could not happen (given the standard conditions on $X$ and $\Gamma$). In the next section, we will construct such an action.

\subsection{M\"{o}bius Transformations on the Riemann Sphere}

In this section, we answer the question of whether every topologically weak mixing system $(X, \Gamma)$ where $X$ is second countable and $\Gamma$ is locally compact and second countable has a thick subset $N$ of $\Gamma$ on which the system is mixing. As we will see, the answer to this question will be no. As an example of when this is not the case, we will consider the action of $GL(2, \C)$ on the Riemann sphere $\C P^1 \simeq \C \cup \{\infty\} \simeq S^2$ given by the m\"obius transformations. 

That is, given a matrix $$\gamma = \begin{bmatrix} a & b\\ c & d \end{bmatrix} \in GL(2, \C),$$ and $z \in \C$, we define $$\gamma (z) = \frac{az + b}{cz+d},$$ and $\gamma (\infty) = a/c$ if $c \neq 0$ and $\infty$ otherwise. It can be checked that this indeed defines a continuous action of $GL(2, \C)$ on $\C P^1$. Furthermore, the following theorem is well-known: 

\begin{theorem} Given any three points $z_1, z_2, z_3 \in \C P^1$, there is a unique mobius transformation that takes $z_1$ to $0$, $z_2$ to $1$, and $z_3$ to $\infty$, given by the following matrix (and non-zero scalar multiples): 
$$M_{(z_1, z_2, z_3)} := \begin{bmatrix} z_2 - z_3 & - z_1(z_2 - z_3) \\ z_2 - z_1 & -z_3(z_2-z_1)\end{bmatrix}.$$
\end{theorem}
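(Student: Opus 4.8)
The plan is to prove existence and uniqueness separately, treating the Möbius transformation as an equivalence class of matrices under nonzero scalar multiplication (so that a ``unique'' transformation corresponds to a one-dimensional subspace of matrices). The cleanest route is to reduce to the special case where the three target points are $0, 1, \infty$ and exploit the fact that a cross-ratio map is determined by its action on three points.

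For \textbf{existence}, I would first write down the candidate matrix $M_{(z_1,z_2,z_3)}$ given in the statement and simply verify by direct substitution that it sends $z_1 \mapsto 0$, $z_2 \mapsto 1$, $z_3 \mapsto \infty$. Plugging $z = z_1$ into $\frac{(z_2-z_3)z - z_1(z_2-z_3)}{(z_2-z_1)z - z_3(z_2-z_1)}$ gives numerator $(z_2-z_3)(z_1 - z_1) = 0$, so $z_1 \mapsto 0$; plugging $z = z_3$ makes the denominator $(z_2-z_1)(z_3 - z_3) = 0$, so $z_3 \mapsto \infty$; and plugging $z = z_2$ gives numerator $(z_2-z_3)(z_2-z_1)$ and denominator $(z_2-z_1)(z_2-z_3)$, which equals $1$. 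I should also note that this matrix lies in $GL(2,\C)$ by checking its determinant is nonzero, which amounts to $-(z_2-z_3)(z_2-z_1)(z_1 - z_3) \neq 0$ whenever $z_1, z_2, z_3$ are distinct; the cases where one of the $z_i$ equals $\infty$ I would handle by taking the appropriate limit of the matrix entries (or by direct checking of the degenerate forms), which is routine.

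For \textbf{uniqueness}, the key reduction is that if $\gamma_1$ and $\gamma_2$ both realize the required assignment, then $\gamma_2^{-1} \gamma_1$ fixes each of $0$, $1$, and $\infty$. So it suffices to show that \emph{the only Möbius transformation fixing $0$, $1$, and $\infty$ is the identity}. A transformation $z \mapsto \frac{az+b}{cz+d}$ fixing $\infty$ forces $c = 0$; fixing $0$ then forces $b = 0$; and fixing $1$ forces $a = d$, leaving $z \mapsto z$. Hence $\gamma_2^{-1}\gamma_1$ is the identity transformation, meaning $\gamma_1$ and $\gamma_2$ differ by a nonzero scalar, which is exactly the claimed uniqueness up to scalar multiples.

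The \textbf{main obstacle} is purely bookkeeping rather than conceptual: one must handle the cases in which one of $z_1, z_2, z_3$ is the point $\infty$, since the formula $\gamma(z) = \frac{az+b}{cz+d}$ and the matrix entries involve differences like $z_2 - z_3$ that are formally infinite in those cases. I would deal with this by passing to the convention $\gamma(\infty) = a/c$ and interpreting the matrix entries projectively (clearing the infinite factor by rescaling the representative matrix), so that, for instance, when $z_3 = \infty$ the matrix degenerates to $\begin{bmatrix} 1 & -z_1 \\ 0 & z_2 - z_1 \end{bmatrix}$ up to scaling. Since the uniqueness argument above never referenced the finiteness of the $z_i$ — it only used that the composite fixes the standard triple $0,1,\infty$ — the uniqueness half goes through uniformly, and only the existence verification requires this case analysis.
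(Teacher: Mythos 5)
Your proposal is essentially correct, but there is nothing in the paper to compare it against: the paper states this theorem as ``well-known'' and gives no proof at all, immediately using the matrix $M_{(z_1,z_2,z_3)}$ in the proof of Proposition \ref{prop:4trans}. Your argument is the standard one and would serve as a valid proof. Two small points to clean up. First, there is an order-of-composition slip in the uniqueness step: if $\gamma_1$ and $\gamma_2$ both send $z_1 \mapsto 0$, $z_2 \mapsto 1$, $z_3 \mapsto \infty$, then it is $\gamma_1 \gamma_2^{-1}$ that fixes $0$, $1$, and $\infty$ (since $\gamma_2^{-1}$ returns $0,1,\infty$ to $z_1,z_2,z_3$), whereas $\gamma_2^{-1}\gamma_1$ fixes $z_1, z_2, z_3$; your key lemma (only the identity fixes $0,1,\infty$) applies to the former, so the fix is just to swap the order. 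Second, the theorem as stated in the paper says ``any three points,'' but both the nonvanishing of the determinant $(z_2-z_3)(z_2-z_1)(z_1-z_3)$ and the uniqueness argument require the three points to be \emph{distinct}; your determinant computation implicitly flags this hypothesis (up to an immaterial sign), and it is worth stating explicitly, since the paper's subsequent use of $M^{-1}_{(w_1,w_2,w_3)}M_{(z_1,z_2,z_3)}$ depends on it.
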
 

We notice that this immediately implies that $(\C P^1, GL(2, \C))$ is $3$-transitive, and therefore $2$-transitive, and therefore weak mixing. If we had that $(\C P^1, GL(2, \C))$ were $k$-transitive for every $k$, we would then have that the system is also strong mixing on a thick subset of $GL(2, \C)$. However, as we will now see, this is not the case. 

\begin{proposition}
\label{prop:4trans}
 $(\C P^1, GL(2, \C))$ is $3$-transitive but not $4$-transitive. 
\end{proposition}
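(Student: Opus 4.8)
The plan is to obtain $3$-transitivity almost immediately from the quoted existence-and-uniqueness theorem, and to obstruct $4$-transitivity using the Möbius invariance of the cross-ratio. Recall that $k$-transitivity means precisely that $\bigcap_{i=1}^{k} N(U_i, V_i) \neq \emptyset$ for every choice of nonempty open $U_1, \ldots, U_k, V_1, \ldots, V_k$.

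For $3$-transitivity, given nonempty open sets $U_1, U_2, U_3, V_1, V_2, V_3 \subset \C P^1$, I would first choose distinct points $u_i \in U_i$ and distinct points $v_i \in V_i$; this is possible since each nonempty open subset of $\C P^1$ is infinite. The quoted theorem supplies Möbius transformations $M_{(u_1, u_2, u_3)}$ and $M_{(v_1, v_2, v_3)}$ carrying the respective triples to $(0, 1, \infty)$. Setting $\gamma = M_{(v_1, v_2, v_3)}^{-1} M_{(u_1, u_2, u_3)} \in GL(2, \C)$ then gives $\gamma u_i = v_i$ for each $i$, so $v_i \in \gamma U_i \cap V_i$ and hence $\bigcap_i N(U_i, V_i) \neq \emptyset$. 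Thus $(\C P^1, GL(2, \C))$ is $3$-transitive.

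For the failure of $4$-transitivity, I would invoke the standard fact that the cross-ratio
$$(z_1, z_2, z_3, z_4) = \frac{(z_1 - z_3)(z_2 - z_4)}{(z_2 - z_3)(z_1 - z_4)}$$
is invariant under the $GL(2, \C)$-action on $\C P^1$ and depends continuously on a quadruple of distinct points. The idea is to fix two quadruples of distinct points $(p_1, p_2, p_3, p_4)$ and $(q_1, q_2, q_3, q_4)$ whose cross-ratios $\lambda$ and $\mu$ are different, and then take $U_i$ and $V_i$ to be small balls around $p_i$ and $q_i$ respectively. By continuity of the cross-ratio, for sufficiently small radii every quadruple selected one point from each $U_i$ has cross-ratio within a prescribed neighborhood of $\lambda$, and every quadruple selected from the $V_i$ has cross-ratio within a neighborhood of $\mu$; shrinking the radii so that these two neighborhoods are disjoint is the crux of the argument. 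Then, arguing by contradiction, if some $\gamma$ satisfied $\gamma U_i \cap V_i \neq \emptyset$ for all $i$, I could pick $u_i \in U_i$ with $v_i := \gamma u_i \in V_i$, and invariance would force $(u_1, u_2, u_3, u_4) = (v_1, v_2, v_3, v_4)$, contradicting the disjointness of the two cross-ratio ranges.

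I expect the main technical point to be exactly this disjointness verification: one must confirm that the cross-ratio is genuinely continuous on the relevant product of neighborhoods and rule out degeneracies, which I would handle by choosing the $p_i$ (and $q_i$) to be four \emph{distinct finite} points and the balls small enough that every one-point-per-$U_i$ selection remains a quadruple of distinct finite points, so that the cross-ratio is defined and continuous throughout. The invariance itself is routine for $GL(2, \C)$ acting by Möbius maps, so the whole obstruction reduces to separating two small open sets in the cross-ratio variable.
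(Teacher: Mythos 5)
Your proposal is correct, and for the failure of $4$-transitivity it takes a genuinely different route from the paper. The $3$-transitivity half is essentially identical in both: choose distinct points $u_i \in U_i$, $v_i \in V_i$ and compose the two normalizing maps to get $\gamma = M_{(v_1,v_2,v_3)}^{-1} M_{(u_1,u_2,u_3)}$ (you are right to insist on distinctness of the chosen points, a hypothesis the paper leaves implicit). For the negative half, the paper argues computationally: it takes $U_i = V_i = B(i,\epsilon)$ for $i = 1,2,3$, writes out the matrix $M_{(w_1,w_2,w_3)}^{-1} M_{(z_1,z_2,z_3)}$ entrywise, and estimates each entry to conclude that any $\gamma$ with $\gamma U_i \cap V_i \neq \emptyset$ for $i \leq 3$ is represented by a matrix of the form $2I + o(c\epsilon)$, hence satisfies $|\gamma(z)| \leq 3$ on a small disk around $0$; it then sets $U_4 = B(0,\epsilon^\prime)$ and $V_4 = B(4,1/2)$ to get the obstruction. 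Your argument replaces all of these entrywise estimates with the single classical fact that M\"{o}bius transformations preserve the cross-ratio, which is continuous (as a rational function with nonvanishing denominator) on quadruples of distinct finite points: two quadruples with different cross-ratios, surrounded by pairwise-disjoint balls small enough that the two cross-ratio ranges are disjoint, immediately force a contradiction with invariance. This is cleaner and more conceptual: it names the invariant actually responsible for the failure (M\"{o}bius maps are sharply $3$-transitive, with the cross-ratio as the complete invariant of quadruples) and shows the obstruction is robust, since any two quadruples with distinct cross-ratios will do, rather than the specific configuration $1,2,3,0 \mapsto 1,2,3,4$. What the paper's computation buys is self-containedness: it uses nothing beyond the quoted normalization theorem and elementary bounds, at the price of an explicit matrix calculation. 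Your one genuine proof obligation beyond what you wrote --- that the balls can be shrunk so every selection is a quadruple of distinct finite points and the two cross-ratio images are disjoint --- is exactly the continuity-plus-disjointness point you flagged, and it is handled correctly by your choice of distinct finite centers and sufficiently small radii.
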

\begin{proof} As m\"{o}bius transformations are invertible, we have that given any two triples of points $(z_1, z_2, z_3)$ and $(w_1, w_2, w_3)$ in $(\C P^1)^3$, there is a unique m\"{o}bius transformation that takes each $z_i$ to $w_i$, represented by the matrix $M^{-1}_{(w_1, w_2, w_3)}M_{(z_1, z_2, z_3)}.$ This then implies $3$-transitivity. 

To show that we cannot have $4$-transitivity, we consider the following counterexample. We let $U_i = V_i = B(i, \epsilon), 1 \leq i \leq 3$, for an $\epsilon > 0$ to be defined later. Then, any m\"{o}bius transformation $\gamma$ such that $\gamma U_i \cap V_i \neq \emptyset$ for all $i$ corresponds to a scalar multiple of the matrix $$M = M^{-1}_{(w_1, w_2, w_3)}M_{(z_1, z_2, z_3)}$$ for $z_i \in U_i, w_i \in V_i$. Explicitly writing out this matrix, we get that 
\begin{align*}
M = \begin{bmatrix} - w_3(w_2 - w_1) & w_1(w_2 - w_3) \\ -(w_2 - w_1) & w_2 - w_3 \end{bmatrix} \begin{bmatrix} z_2 - z_3 & - z_1 (z_2 - z_3) \\ z_2 - z_1 & - z_3(z_2 - z_1) \end{bmatrix} = \begin{bmatrix} a & b \\ c & d \end{bmatrix},
\end{align*}
where 
$$\begin{cases} 
a & = w_3(w_2-w_1)(z_3-z_2) - w_1(w_3-w_2)(z_2-z_1) \\
b & =  -w_3z_1(w_2 - w_1)(z_3-z_2) + w_1z_3(w_3-w_2)(z_2-z_1) \\
c & = (w_2-w_1)(z_3-z_2) - (w_3-w_2)(z_2-z_1) \\
d & = -z_1(w_2-w_1)(z_3-z_2) + z_3(w_3-w_2)(z_2-z_1).
\end{cases}$$
Understanding $z = o(\epsilon)$ to mean that $|z| \leq \epsilon$, we have that $w_i = i + o(\epsilon)$ for $i = 1,2,3$ and $w_i - w_{i-1} = 1 + o(2\epsilon)$ for $i=2,3$, and similarly for $z_i$. As a result, there is a constant $c > 0$ such that for all $\epsilon$ small enough, $$M = \begin{bmatrix} 2 + o(c\epsilon) & o(c \epsilon) \\ o(c \epsilon) & 2 + o(c \epsilon) \end{bmatrix}.$$
Then, letting $\gamma$ be the m\"{o}bius transformation associated with $M$, we have that given $\epsilon^\prime > 0$, for any $|z| \leq \epsilon^\prime$, we have that 
$$|\gamma(z)| = \left| \frac{az+b}{cz+d} \right| \leq \frac{(2 + c\epsilon )\epsilon^\prime + c \epsilon}{- c \epsilon \epsilon^\prime + 2 - c \epsilon} \leq \frac{3}{1} = 3,$$ for small enough $\epsilon$ and $\epsilon^\prime$. Thus, there does not exist a $\gamma \in GL(2, \C)$ such that $\gamma U_i \cap V_i \neq \emptyset$ for $U_i$ and $V_i$, $1 \leq i \leq 3$, as defined above and $U_4 = B(0, \epsilon^\prime), V_4 = B(4, 1/2)$. Hence, $(\C P^1, GL(2, \C))$ is not $4$-transitive. 
\end{proof}

Since $k$-transitivity for all $k$ was equivalent to thick strong mixing, the previous proposition shows that this system $(\C P^1, GL(2, \C))$ is weak mixing but not thick strong mixing.

With this result, we have shown that there exist topologically weak mixing systems $(X, \Gamma)$ where $X$ is second countable and $\Gamma$ is second countable, locally compact for which there does not exist a thick subset of $\Gamma$ on which we have strong mixing, when $\Gamma$ is nonabelian. 

We can ask then if we can find certain classes of nonabelian groups for which we do have that a weakly mixing group action is then thick strong mixing. This is one of the questions that we will investigate in the next section.                                                          

\section{Extensions and Further Directions}

In this section, we will try to extend our understanding of thick strong mixing by pursuing two different lines of inquiry: 
\begin{enumerate}
\itemsep0em 
\item We found that in topological dynamics an analogue of Theorem \ref{thm:mdo2} (weak mixing implies strong mixing on a density one subset) held for abelian groups. However, there was a precursor to this theorem, Theorem \ref{thm:mdo1}, that said that for any two measurable sets $A$ and $B$, $$\lim_{n \in J, n \rightarrow \infty} \mu (T^{-n} A \cap B) = \mu (A) \mu (B)$$ for some density one subset $J$ that may depend on $A$ and $B$. Can we find an analogue of this theorem in topological dynamics? That is, when does there exist a thick set $N_{A, B}$ for each pair of nonempty open $A, B$ such that $\gamma A \cap B \neq \emptyset$ for all $\gamma \in N_{A,B} \backslash F$ for some compact $F$?

\item Since we have that under suitable initial conditions the weak mixing action of an abelian group is automatically thick strong mixing, a natural question to ask is the following: are there larger classes of groups for which this implication (weak mixing implies thick strong mixing) holds? In particular, it makes sense to look at classes of groups that are ``almost" abelian in some way.
\end{enumerate}

We will see that the investigation of both of these problems will naturally lead us to consider the implications between various notions of transitivity, including which notions of transitivity pass down to finite index subgroups. 

\subsection{Intersections on Thick Sets} 

We begin by looking at the first problem stated above. We found in an earlier section a characterization of when a weakly mixing action was also thick strong mixing, allowing us to conclude that a result like Theorem \ref{thm:mdo2} existed in topological dynamics. 

To find an analogue of Theorem \ref{thm:mdo1}, we ask for which systems $(X, \Gamma)$ do with have a thick set for each pair of nonempty open sets $A$, $B$ in $X$ (possibly dependent on $A$ and $B$) on which we have the strong mixing property. That is, we ask for which systems $(X, \Gamma)$ does there exist for every pair of nonempty open sets $A, B \subset X$ a thick set $N_{A,B}$ such that $\gamma A \cap B \neq \emptyset$ for all $\gamma \in N_{A,B} \backslash F$ for a compact set $F$. As thick sets cannot be contained in a compact set by Lemma \ref{lem:notthick}, we can see that $N_{A,B} \backslash F \subset N(A,B)$ and is thick so this is equivalent to the following question. 

\textbf{Question:} For which systems $(X, \Gamma)$ is $N(A,B)$ thick for every pair of nonempty open sets $A$ and $B$ in $X$. 

We will show that the property of $N(A, B)$ being thick for all nonempty open $A$ and $B$ in $X$ is equivalent to the action of $\Gamma$ on $X$ being elastic, a notion that we will define here. The action of $\Gamma$ on $X$ is said to be \textbf{elastic} if for any finite collection of nonempty open sets $U, V_1, V_2, \ldots, V_n$, there exists a $\gamma \in \Gamma$ such that $\gamma U \cap V_i \neq \emptyset$ for all $i, 1 \leq i \leq n$. In our next proposition, we state and prove the aforementioned equivalence. 

\begin{theorem} For a system $(X, \Gamma)$, the property that $N(A, B)$ is thick for all nonempty open $A, B \subset X$ is equivalent to the elasticity of the action of $\Gamma$ on $X$. 
\end{theorem}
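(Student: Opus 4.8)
The plan is to prove the two implications separately, using throughout the identity $\gamma N(U, V) = N(U, \gamma V)$ already exploited in the proof of Lemma \ref{lem:trans}; it holds because each $\gamma$ acts as a homeomorphism of $X$, so $\gamma^{-1}\eta U \cap V \neq \emptyset$ if and only if $\eta U \cap \gamma V \neq \emptyset$. With this identity, thickness of $N(A,B)$ unwinds to a concrete statement: for every finite $F \subset \Gamma$ there is a $\delta$ with $\delta A \cap B \neq \emptyset$ and $\delta A \cap \gamma B \neq \emptyset$ for all $\gamma \in F$, i.e. a single translate $\delta A$ simultaneously meets $B$ and every translate $\gamma B$.

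For the forward implication (elasticity $\Rightarrow$ thickness of every $N(A,B)$) I would argue directly. Fix nonempty open $A, B$ and a finite set $F = \{\gamma_1, \ldots, \gamma_m\}$. The sets $B, \gamma_1 B, \ldots, \gamma_m B$ are nonempty and open, so elasticity applied with $U = A$ and these as the targets produces a $\delta$ with $\delta A \cap B \neq \emptyset$ and $\delta A \cap \gamma_i B \neq \emptyset$ for every $i$. By the identity this $\delta$ lies in $N(A,B) \cap \bigcap_{i} \gamma_i N(A,B)$, which is therefore nonempty; as $F$ was arbitrary, $N(A,B)$ is thick.

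The reverse implication (thickness of every $N(A,B)$ $\Rightarrow$ elasticity) is where the real work lies, and I expect it to be the main obstacle: thickness of a single $N(A,B)$ only controls translates of one set $B$, whereas elasticity demands that one image $\gamma U$ meet arbitrary unrelated open sets $V_1, \ldots, V_n$. The idea is to manufacture from the $V_i$ a single open set whose translates sit inside the various $V_i$, thereby reducing the problem to the controllable case. First note that thickness forces $N(A,B) \neq \emptyset$ for all open $A,B$, so the system is transitive. Given $U, V_1, \ldots, V_n$, I would then build a descending chain of nonempty open sets with accompanying group elements: set $W_1 = V_1$ and $\eta_1 = e$; having found nonempty open $W_k \subseteq V_1$ and $\eta_1, \ldots, \eta_k$ with $\eta_j W_k \subseteq V_j$ for $j \leq k$, apply transitivity to obtain $\eta_{k+1}$ with $\eta_{k+1} W_k \cap V_{k+1} \neq \emptyset$, then use continuity of the homeomorphism determined by $\eta_{k+1}$ to shrink to a nonempty open $W_{k+1} \subseteq W_k$ with $\eta_{k+1} W_{k+1} \subseteq V_{k+1}$. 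The earlier containments $\eta_j W_{k+1} \subseteq \eta_j W_k \subseteq V_j$ survive because we only ever shrink $W_k$. After $n$ steps this yields a nonempty open set $V^* := W_n$ and elements $\eta_1, \ldots, \eta_n$ with $\eta_i V^* \subseteq V_i$ for all $i$.

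To finish I would invoke thickness of $N(U, V^*)$ with $F = \{\eta_1, \ldots, \eta_n\}$: this produces a $\delta$ in $\bigcap_{i} \eta_i N(U, V^*) = \bigcap_{i} N(U, \eta_i V^*)$, so that $\delta U \cap \eta_i V^* \neq \emptyset$ for each $i$. Since $\eta_i V^* \subseteq V_i$, this gives $\delta U \cap V_i \neq \emptyset$ for every $i$, which is precisely elasticity for $U, V_1, \ldots, V_n$. The only delicate points are the repeated use of continuity to upgrade ``$\gamma U$ meets $V$'' to ``$\gamma U' \subseteq V$'' after passing to a smaller open set, and the bookkeeping ensuring the previously arranged containments are not destroyed by shrinking --- both routine once one notes that every $\gamma \in \Gamma$ acts as a homeomorphism of $X$.
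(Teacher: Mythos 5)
Your proposal is correct and follows essentially the same route as the paper: the forward direction uses the identity $\gamma N(U,V) = N(U,\gamma V)$ to translate thickness into an elasticity statement with targets $B, \gamma_1 B, \ldots, \gamma_m B$, and the reverse direction builds the same descending chain of open sets (your $W_k$, the paper's $\tilde V_k$, obtained by intersecting with preimages $\eta_{k+1}^{-1}V_{k+1}$) to reduce the unrelated targets $V_1, \ldots, V_n$ to translates of a single open set, then applies thickness of $N(U, V^*)$. The only cosmetic difference is that you carry $\eta_1 = e$ explicitly while the paper uses the containment $V^* \subset V_1$ directly.
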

\begin{proof}
We suppose first that we have the elasticity of the action of $\Gamma$ on $X$. Then, for any pair of nonempty open sets $A, B \subset X$ and finite set of $\gamma_1, \ldots, \gamma_n$, we have that
\begin{align*}
N(A, B) \cap \gamma_1 N(A, B) \cap \ldots \cap \gamma_n N(A,B) = N(A, B) \cap N(A, \gamma_1 B) \cap \ldots \cap N(A, \gamma_n B) \neq \emptyset
\end{align*}
by the elasticity of the action of $\Gamma$. 

Conversely, we suppose that $N(A, B)$ is thick for all nonempty open $A$ and $B$ and that we had some nonempty open sets $U, V_1, \ldots, V_n$. Now, we can find a $\gamma_2$ such that $\gamma_2^{-1} V_2 \cap V_1 =: \tilde V_2 \neq \emptyset$. Repeating this process inductively for $i = 3, \ldots, n$, we can find $\gamma_i$ such that $\gamma_i^{-1} V_i \cap \tilde V_{i-1} =: \tilde V_i \neq \emptyset$. Then, setting $V = \tilde V_n$, we have that $\gamma_i V \subset V_i$ for all $i, 2 \leq i \leq n$. 

Then, we have that $N(U, \gamma_i V) \subset N(U, V_i)$ for $2 \leq i \leq n$. It follows then that 
\begin{align*} 
N(U, V_1) \cap \ldots \cap N(U, V_n) & \supset N(U, V) \cap N(U, \gamma_2 V) \cap \ldots \cap N(U, \gamma_n V) \\
& = N(U, V) \cap \gamma_2 N(U, V) \cap \ldots \cap \gamma_n N(U, V) \\
& \neq \emptyset,
\end{align*} 
as $N(U, V)$ is thick. Thus, the action of $\Gamma$ is elastic. 
\end{proof}

Now that we have the equivalence of the thickness of $N(A, B)$ for all nonempty open $A$ and $B$ and the property of elasticity, we can use known results about elasticity to help us better understand for which systems $N(A, B)$ is always thick. Cairns et. al. give the following implication graph in their paper: 

\begin{theorem}[\cite{C}, Theorem 1] Let $(X, \Gamma)$ be a topological system where $\Gamma$ is an infinite group and $X$ a Hausdorff topological space. Then the following implications hold. 

\begin{center}
\begin{tikzpicture}
\node at (5,5) {\text{strongly mixing}};
\draw[->] (5,4.5) -- (5,4);
\node at (5, 3.5) {\text{$k$-transitive for all $k$}};
\draw[->] (3.5, 3) -- (2.5, 2.5);
\draw[->] (6.5, 3) -- (7.5, 2.5);
\node at (2, 2) {\text{weakly mixing}};
\node at (8, 2) {\text{elastic}};
\draw[->] (2.5, 1.5) -- (3.5, 1);
\draw[->] (7.5, 1.5) -- (6.5, 1);
\node at (5, 0.5) {\text{transitive}};
\end{tikzpicture}
\end{center}
\end{theorem}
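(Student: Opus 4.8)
The statement packages five separate implications among the notions \textbf{strongly mixing}, \textbf{$k$-transitive for all $k$}, \textbf{weakly mixing}, \textbf{elastic}, and \textbf{transitive}, so the plan is to verify each arrow of the graph directly from the definitions. Four of the five turn out to be special cases or instances of a single notion, and only the top arrow (strong mixing down to $k$-transitivity for all $k$) uses the hypothesis that $\Gamma$ is infinite.

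First I would record the monotonicity fact that $j$-transitivity follows from $k$-transitivity whenever $j \le k$: given nonempty open $U_1, \dots, U_j, V_1, \dots, V_j$, pad the list with $U_{j+1} = \cdots = U_k = X$ and $V_{j+1} = \cdots = V_k = X$, so that a single $\gamma$ witnessing $k$-transitivity satisfies in particular the first $j$ constraints (the padded constraints $\gamma X \cap X \neq \emptyset$ holding automatically). The arrow from $k$-transitive for all $k$ to weak mixing is then the $k=2$ instance, since weak mixing is precisely $2$-transitivity; and the arrow from weak mixing to transitivity is the instance $j=1 \le 2 = k$ of monotonicity. For the right-hand arrows: elasticity with $n=1$ is literally transitivity, so elastic $\Rightarrow$ transitive is immediate; and $k$-transitivity for all $k$ implies elasticity by applying $n$-transitivity to the pairs with common source, $(U, V_1), \dots, (U, V_n)$, which yields a $\gamma$ with $\gamma U \cap V_i \neq \emptyset$ for every $i$.

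The one arrow with genuine content is \textbf{strongly mixing} $\Rightarrow$ \textbf{$k$-transitive for all $k$}, and here I would reuse the argument already recorded in Proposition \ref{prop:forward}, noting that $\Gamma$ is thick in itself (for any finite $F$ one has $\Gamma \cap \bigcap_{\gamma \in F}\gamma\Gamma = \Gamma$), so that a strongly mixing system is thick strongly mixing with $N = \Gamma$. Concretely, for pairs $(U_1, V_1), \dots, (U_k, V_k)$ strong mixing supplies compact sets $F_i$ with $N(U_i, V_i) \supseteq \Gamma \setminus F_i$, so that $\bigcap_{i=1}^k N(U_i, V_i) \supseteq \Gamma \setminus \bigcup_{i=1}^k F_i$; the exceptional set $\bigcup_i F_i$ is a finite union of compacts and hence compact, so its complement is nonempty and yields a single $\gamma$ witnessing $k$-transitivity.

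The hard part — really the only point where anything must be checked — is ensuring that $\Gamma \setminus \bigcup_i F_i$ is nonempty, which requires $\Gamma$ to be noncompact. I would reconcile this with the stated hypothesis that $\Gamma$ is merely \emph{infinite} by observing that in this convention $\Gamma$ is treated as a discrete group, so ``compact'' means ``finite'' and an infinite $\Gamma$ automatically has nonempty complement of any finite set; this is the same phenomenon underlying Lemma \ref{lem:notthick}, that a compact proper subset cannot exhaust $\Gamma$. Once this interpretive point is fixed, every arrow is immediate and no further estimates are needed.
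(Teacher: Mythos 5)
Your proof is correct, but note that the paper offers no proof of this statement at all: it is quoted verbatim from Cairns et al.\ \cite{C}, Theorem 1, and used as a black box, so your argument is not paralleling a proof in the paper but supplying one. The four lower arrows are exactly as routine as you make them: monotonicity of $k$-transitivity by padding with copies of $X$, weak mixing being the $k=2$ case, elasticity at $n=1$ being literally transitivity, and $n$-transitivity applied to the pairs $(U,V_1),\dots,(U,V_n)$ giving elasticity. The real content of your write-up is the interpretive point in the final paragraph, and it is not optional pedantry: under the paper's own definition of strong mixing (compact exceptional sets in a topological group $\Gamma$), the hypothesis ``$\Gamma$ infinite'' is genuinely insufficient. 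For instance, an infinite compact group acting trivially on the two-point discrete space is vacuously strongly mixing (take $F=\Gamma$, so that $\Gamma\setminus F=\emptyset$), yet it is not even transitive; this is exactly the degeneracy the paper itself flags when it excludes compact groups. The statement is borrowed from \cite{C}, where $\Gamma$ carries no topology and exceptional sets are finite, so reading $\Gamma$ as discrete (compact $=$ finite) is precisely what makes ``infinite'' the right hypothesis; with that convention your union-of-finitely-many-finite-sets argument, which is the same mechanism as Proposition \ref{prop:forward} run with $N=\Gamma$, closes the top arrow. In short: correct, self-contained, and appropriately attentive to the one mismatch of conventions between the paper's definitions and those of its source.
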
 

As a result, we see that $N(A, B)$ being thick (which we recall is equivalent to elasticity) is independent from weak mixing. However, as for abelian groups $\Gamma$ weak mixing implies $k$-transitivity for all $k$, we have the following corollary. 

\begin{corollary} For systems $(X, \Gamma)$ where $\Gamma$ is an abelian group, weak mixing implies that $N(A, B)$ is thick for all nonempty open sets $A, B \subset X$. 
\end{corollary}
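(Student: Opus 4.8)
The plan is to assemble this corollary by chaining together three results already in hand, with the crucial observation being that the abelian hypothesis lets us promote the comparatively weak assumption of weak mixing all the way up to $k$-transitivity for all $k$, which sits strictly above elasticity in the implication hierarchy of \cite{C}.

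First I would recall the standard fact, already invoked in the proof of Corollary \ref{cor:abelian}, that when $\Gamma$ is abelian, weak mixing of $(X, \Gamma)$ implies that $(X, \Gamma)$ is $k$-transitive for every $k \in \N$. This is exactly the step that upgrades weak mixing (equivalently, $2$-transitivity) to full $k$-transitivity, and it is the only place where abelianness enters. Next I would appeal to the implication graph of Cairns et al.\ (\cite{C}, Theorem 1), which records that $k$-transitivity for all $k$ implies elasticity of the action. Finally I would invoke the preceding theorem, which establishes that elasticity of the action of $\Gamma$ on $X$ is equivalent to $N(A, B)$ being thick for every pair of nonempty open sets $A, B \subset X$. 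Stringing these together gives the chain: weak mixing $\Rightarrow$ $k$-transitive for all $k$ $\Rightarrow$ elastic $\Rightarrow$ $N(A, B)$ thick for all nonempty open $A, B$, which is precisely the claim.

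Since every link in this chain has already been proven, there is no genuinely difficult step; the only thing I would verify is that the hypotheses line up across the three results. In particular, the Cairns graph requires $\Gamma$ infinite and $X$ Hausdorff, both of which are in force under our standing assumptions, and the elasticity equivalence carries no additional hypotheses beyond $(X, \Gamma)$ being a topological system. The real content of the corollary is the recognition that abelianness is exactly what bridges the gap between weak mixing and the stronger transitivity condition that forces elasticity, so that the thickness of each $N(A, B)$ comes for free.
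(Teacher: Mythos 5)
Your proof is correct and follows essentially the same route as the paper: weak mixing plus abelianness upgrades to $k$-transitivity for all $k$, the Cairns implication graph gives elasticity, and the preceding theorem converts elasticity into thickness of every $N(A,B)$. The hypothesis check (noncompact Hausdorff $\Gamma$ is infinite, $X$ Hausdorff) is also handled correctly.
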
 

This extends the implication $(i) \Rightarrow (iv)$ in Theorem \ref{thm:tdt} to a larger class of groups and spaces (beyond compact spaces and discrete, countable, abelian groups) and gives us a counterpart to Theorem \ref{thm:mdo1} in topological dynamics, after replacing the notion of density one with that of thickness. 

\subsection{Some ``Almost" Abelian Groups}

We also want to know for which nonabelian groups $\Gamma$ does a system $(X, \Gamma)$ being weak mixing imply that it is also thick strong mixing. As we know that for abelian groups this implication always holds, it is natural to first look at classes of groups that are ``almost" abelian. We will start out by looking at virtually abelian groups. 

We say that a group $\Gamma$ is \textbf{virtually abelian} if it contains a finite order subgroup $N \subset \Gamma$ such that $N$ is abelian. For example, any semidirect product $N \rtimes H$ for $N$ an abelian group and $H$ a finite group is virtually abelian, as the subgroup consisting of elements $(n,e)$ for $n \in N$ and $e$ the identity in $H$ is then an abelian group of index $|H|$. 

Another way that a group can be ``almost" abelian if it can be built up from an abelian group by a finite number of abelian extensions. This motivates us to look at the class of nilpotent group. We say that a group $\Gamma$ is \textbf{nilpotent} if its lower central series is finite: $$\Gamma = \Gamma_1 \trianglerighteq \Gamma_2 \trianglerighteq \ldots \trianglerighteq \Gamma_n = \{1\},$$ where each $\Gamma_{i+1} = [\Gamma_i, \Gamma]$, the subgroup of $\Gamma$ generated by all commutators $[x,y]$ where $x$ is in $\Gamma_i$ and $y$ is in $\Gamma$. We notice here that $\Gamma_{n-1}$ is abelian and each $\Gamma_i/\Gamma_{i+1}$ is abelian. 

To attack the problem of whether the weak mixing actions of virtually abelian groups are necessarily thick strong mixing, we might hope to follow a line of reasoning as follows: if the weak mixing of the action of $\Gamma$ implied that the action of the finite index abelian subgroup $N$ is also weak mixing, the action of $N$ would be $k$-transitive for all $k$. This would then pass up to the whole group $\Gamma$, making $(X, \Gamma)$ $k$-transitive for all $k$ and we could apply our main theorem, Theorem \ref{thm:main}, to say that $(X, \Gamma)$ is thick strong mixing. 

Similarly, we might hope in the nilpotent case that the weak mixing property of the action of $\Gamma_1 = \Gamma$ might pass down to $\Gamma_2, \Gamma_3, \ldots, \Gamma_{n-1}$ and we could use the abelian property of $\Gamma_{n-1}$ to make a similar argument about passing $k$-transitivity up to the whole group $\Gamma$. 

However, we will see that this approach seems infeasible as there are examples of weak mixing group actions $(X, \Gamma)$ and a normal subgroup $N \trianglelefteq \Gamma$ of index $2$ such that $(X, N)$ is no longer weak mixing. 

\subsection{Some Results about Mixing and Subgroups}

Here, we will give a brief overview of some of the things that are known about various mixing properties passing to subgroups. We will then apply these results to a discussion of our problem. 

The first result of interest to us is that weak mixing does not pass down to finite index subgroups. 

\begin{example}[\cite{C}, Proposition 5]\label{ex:homeo} If $(X, \Gamma)$ is weak mixing and $N \subset \Gamma$ is a subgroup of finite index, then $(X, N)$ need not be weak mixing. Consider $X = \R$, and $\Gamma$ to be the group of homeomorphisms on $\R$. Then, we can see that $(X, \Gamma)$ is weak mixing. However, if we let $N$ be the subgroup of orientation preserving transformations on $\R$, $(X, N)$ is no longer weak mixing and $\Gamma / N \simeq \Z_2$. 
\end{example}

Now, the methods that we sought to use in the previous section are looking less feasible now as in this example, we were not able to pass weak mixing down to a finite index subgroup, even with the condition that $\Gamma / N$ is abelian. 

However, this discussion leads naturally to the following question: we have seen that weak mixing does not pass down to finite index subgroups. Are there other notions of mixing that do pass down to finite index subgroups? In particular, does thick strong mixing pass down to finite index subgroups? 

For further motivation, we can look at the picture in ergodic theory. Here, we have the following result. 

\begin{proposition}[\cite{P}, Corollary 2.2.12]
Let $(X, \mathcal{B}, \mu, \Gamma)$ be a weak mixing, measure preserving system on a probability space where $\Gamma$ is a countable group. If $N \subset \Gamma$ is a finite index subgroup, then $(X, \mathcal{B}, \mu, \Gamma)$ is also weak mixing. 
\end{proposition}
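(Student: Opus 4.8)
The plan is to argue through the spectral (Koopman) characterization of weak mixing rather than through ergodicity of the product system, because ergodicity does not in general descend to finite index subgroups whereas weak mixing does — and the spectral picture makes this robustness transparent. Recall that for a measure-preserving action of a countable group $\Gamma$ on $(X, \mathcal{B}, \mu)$, weak mixing is equivalent to the statement that the Koopman representation on $L^2_0(X) := L^2(X,\mu) \ominus \C$ (the mean-zero square-integrable functions) admits no nonzero finite-dimensional $\Gamma$-invariant subspace. I would take this standard equivalence as given. Writing $U_g : L^2(X) \to L^2(X)$ for the unitary $U_g f = f \circ g^{-1}$, note that each $U_g$ fixes the constants and is unitary, hence preserves $L^2_0(X)$, and that $U_\gamma U_g = U_{\gamma g}$.

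Suppose, for contradiction, that the restricted action $(X, \mathcal{B}, \mu, N)$ is \emph{not} weak mixing. Applying the characterization to $N$, there is a nonzero finite-dimensional $N$-invariant subspace $W \subseteq L^2_0(X)$, i.e.\ $U_n W = W$ for every $n \in N$. From this single $N$-invariant subspace I would manufacture a $\Gamma$-invariant one and contradict the weak mixing of the full $\Gamma$-action. Since $[\Gamma : N] = k < \infty$, fix coset representatives $g_1 = e, g_2, \ldots, g_k$ with $\Gamma = \bigsqcup_{i=1}^k g_i N$, and set
$$\tilde W = \sum_{i=1}^k U_{g_i} W \subseteq L^2_0(X).$$
This is nonzero (it contains $W$) and finite-dimensional (a finite sum of finite-dimensional subspaces).

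The crux is $\Gamma$-invariance. Given $\gamma \in \Gamma$ and an index $i$, the element $\gamma g_i$ lies in a unique coset $g_j N$, so $\gamma g_i = g_j n$ for some $n \in N$; hence for $w \in W$,
$$U_\gamma U_{g_i} w = U_{\gamma g_i} w = U_{g_j} U_n w \in U_{g_j} W \subseteq \tilde W,$$
using $U_n w \in W$ by the $N$-invariance of $W$. Thus $U_\gamma \tilde W \subseteq \tilde W$ for every $\gamma \in \Gamma$, and since each $U_\gamma$ is injective and $\tilde W$ is finite-dimensional, this inclusion is an equality, so $\tilde W$ is $\Gamma$-invariant.

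We have produced a nonzero finite-dimensional $\Gamma$-invariant subspace of $L^2_0(X)$, contradicting the weak mixing of $(X, \mathcal{B}, \mu, \Gamma)$; hence the $N$-action is weak mixing. The main obstacle here is not the coset bookkeeping, which is routine, but in securing the correct reformulation of weak mixing at the outset: everything hinges on using the finite-dimensional-invariant-subspace criterion (equivalently, the absence of nontrivial finite-dimensional subrepresentations of the Koopman representation) rather than the product-ergodicity criterion, precisely because the latter is not inherited by finite index subgroups. It is worth flagging the contrast with the topological setting of Example \ref{ex:homeo}, where no such spectral mechanism is available and weak mixing genuinely fails to pass to a finite index subgroup.
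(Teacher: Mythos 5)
Your proof is correct. Note that the paper itself gives no proof of this proposition---it is quoted verbatim from \cite{P} (Corollary 2.2.12)---and your argument, passing to the finite-dimensional-invariant-subspace characterization of weak mixing and then inducing a $\Gamma$-invariant subspace $\tilde W = \sum_{i=1}^k U_{g_i} W$ from an $N$-invariant one via coset representatives, is precisely the standard proof of that cited result, so you have in effect supplied the proof the paper outsources.
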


We see by a similar idea as in Example \ref{ex:homeo} that such a proposition does not hold in topological dynamics, even for countable groups: 

\begin{example} Let $X = S^1$ be the unit circle in the complex plane. Let $t$ be an automorphism of $S^1$ with exactly four fixed points at $1, i, -1, - i $ such that $1$ and $-1$ are attracting and $\pm i $ are repelling. Choose $t$ so that it also commutes with conjugation. A graph of such a transformation is plotted below, where if $(x,y)$ is a point on the graph, then $t(e^{i x}) = e^{i y}$: 

\begin{center}
\begin{tikzpicture}[scale = 2]
\node[inner sep=0pt] (russell) at (0,0)
    {\includegraphics[width=.3\textwidth]{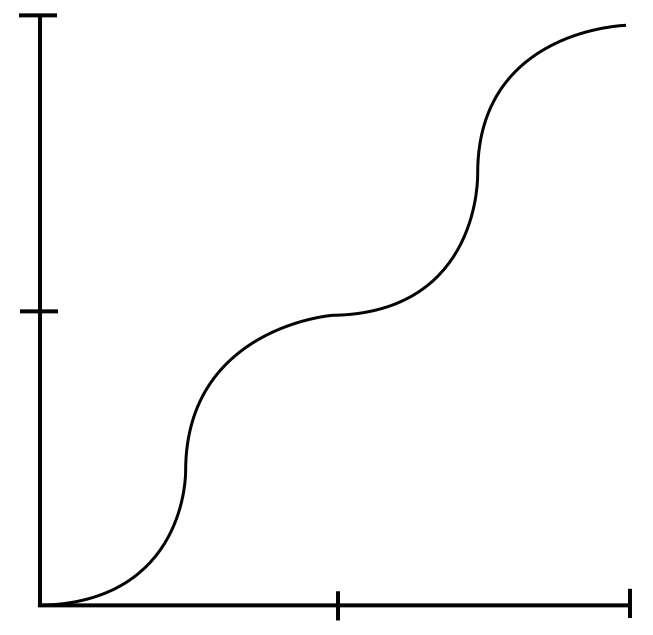}};
\node at (-1.15,-1.15) {$0$};
\node at (-1.25,0) {$\pi$};
\node at (-1.25,1.1) {$2\pi$};
\node at (-0,-1.25) {$\pi$};
\node at (1.1,-1.25) {$2\pi$};

\end{tikzpicture}
\end{center}

Let $c$ be conjugation and $r$ be rotation by $\alpha$, an irrational multiple of $2\pi$. Then, letting $\Gamma = \langle t, c, r : tc = ct,  cr = r^{-1}c, c^2 = 1\rangle$, we have that $(X, \Gamma)$ is weak mixing, but $(X, N)$ is not weak mixing for $N = \langle t, r\rangle$, an index two subgroup of $\Gamma$. 

In this example, we again have that the action of the whole group is weak mixing, but the action of the subgroup is not because these transformations are somehow ``orientation preserving". 
\end{example}

While weak mixing does not pass down to finite index subgroups even in the case of countable groups, we will see that thick strong mixing does pass down, as a result of the following lemma: 

\begin{lemma}[\cite{C}, Lemma 9] If $\Gamma$ is $k \ell$-transitive, then every subgroup of index $\ell$ is $k$-transitive. 
\end{lemma}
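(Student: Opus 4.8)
The plan is to fix a subgroup $H \le \Gamma$ of index $\ell$, choose left coset representatives $g_1, \ldots, g_\ell$ so that $\Gamma = \bigsqcup_{j=1}^{\ell} g_j H$, and verify directly that the restricted action $(X, H)$ is $k$-transitive. So I would start from arbitrary nonempty open sets $U_1, \ldots, U_k, V_1, \ldots, V_k \subset X$ and aim to produce a single $h \in H$ with $hU_i \cap V_i \neq \emptyset$ for every $i$; equivalently, an element of $H$ lying in $\bigcap_{1 \le i \le k} N(U_i, V_i)$.

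The idea is to manufacture a full system of $k\ell$ pairs out of the given $k$ pairs by pushing each target set around by every coset representative. Since $\Gamma$ acts by homeomorphisms, each $g_j V_i$ is again nonempty and open, so the family $\{(U_i, g_j V_i) : 1 \le i \le k, \ 1 \le j \le \ell\}$ is a legitimate collection of $k\ell$ pairs. Applying the hypothesis that $(X, \Gamma)$ is $k\ell$-transitive to this family produces one element $\gamma \in \Gamma$ with $\gamma U_i \cap g_j V_i \neq \emptyset$ for all $i$ and all $j$ simultaneously.

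To finish, I would locate the coset of $\gamma$: there is a unique $j_0$ with $\gamma \in g_{j_0} H$, say $\gamma = g_{j_0} h$ with $h \in H$. Reading off only the instances of the conditions with $j = j_0$, I have $\gamma U_i \cap g_{j_0} V_i \neq \emptyset$ for each $i$; applying the homeomorphism $g_{j_0}^{-1}$ gives $h U_i \cap V_i \neq \emptyset$ for each $i$, with the same $h \in H$. This is exactly $k$-transitivity of $H$.

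The one point that genuinely needs the full strength of $k\ell$-transitivity (rather than, say, $k$-transitivity applied $\ell$ times) is that the coset of $\gamma$ cannot be predicted in advance. Because I do not know which $g_{j_0} H$ the transitive element falls into, I must impose the intersection conditions against all $\ell$ translates $g_1 V_i, \ldots, g_\ell V_i$ at once and then harvest only the block matching the coset that happens to occur. I expect this bookkeeping — keeping the roles of the index $i$ (running over the $k$ pairs) and $j$ (running over the $\ell$ cosets) straight, and checking that left-multiplication by $g_{j_0}^{-1}$ carries the $j_0$-conditions to the desired conditions inside $H$ — to be the only delicate part; there are no analytic or topological difficulties beyond the fact that each group element acts as a homeomorphism.
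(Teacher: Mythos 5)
The paper offers no proof of this lemma at all --- it is quoted directly from Cairns et al.\ \cite{C} (their Lemma 9) --- so there is no in-paper argument to compare against. Your proof is correct, self-contained, and is the standard argument for this fact: apply $k\ell$-transitivity to the $k\ell$ pairs $(U_i, g_j V_i)$, locate the left coset $g_{j_0}H$ containing the resulting $\gamma$, and translate by the homeomorphism $g_{j_0}^{-1}$ to obtain $h = g_{j_0}^{-1}\gamma \in H$ with $hU_i \cap V_i \neq \emptyset$ for all $i$. The only hypothesis you use beyond the group theory --- that each $g_j$ acts by a homeomorphism, so each $g_j V_i$ is again nonempty and open --- is part of the paper's standing assumption that $\Gamma$ acts continuously, so the argument is complete as written.
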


From this, the equivalence of $k$-transitivity for all $k$ and thick strong mixing, and that $k$-transitivity on a subgroup implies $k$-transitivity on the whole group, we have the immediate consequence: 

\begin{proposition}
For topological systems $(X, \Gamma)$ where $X$ is second countable, Hausdorff and $\Gamma$ is Hausdorff, second countable, locally compact but not compact, then if $N \subset \Gamma$ is a finite index subgroup
\begin{itemize}
\itemsep0em 
\item[(i)] $(X, \Gamma)$ is $k$-transitive for all $k$ if and only if $(X, N)$ is $k$-transitive for all $k$.
\item[(ii)] As a consequence, if $(X, \Gamma)$ is thick strong mixing if and only if $(X, N)$ is thick strong mixing. 
\end{itemize}
\end{proposition}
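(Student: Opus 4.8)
The plan is to deduce both statements directly from the three ingredients assembled immediately before the proposition: the equivalence of $k$-transitivity for all $k$ with thick strong mixing (Theorem \ref{thm:main}), the result of \cite{C} that $k\ell$-transitivity of $\Gamma$ forces $k$-transitivity of every index-$\ell$ subgroup, and the elementary observation that a transitivity property witnessed inside a subgroup is \emph{a fortiori} witnessed in the whole group. I would prove (i) first and then feed it into a chain of equivalences to obtain (ii).

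For (i), set $\ell = [\Gamma : N]$. For the forward direction, assume $(X, \Gamma)$ is $k$-transitive for all $k$ and fix an arbitrary $k \in \N$. Then $(X, \Gamma)$ is in particular $k\ell$-transitive, so the cited lemma of \cite{C}, applied to the index-$\ell$ subgroup $N$, gives that $(X, N)$ is $k$-transitive; since $k$ was arbitrary, $(X, N)$ is $k$-transitive for all $k$. The reverse direction is immediate: if $(X, N)$ is $k$-transitive, then for any nonempty open $U_1, \ldots, U_k, V_1, \ldots, V_k$ there is a $\gamma \in N$ with $\gamma U_i \cap V_i \neq \emptyset$ for all $i$, and since $N \subseteq \Gamma$ this same $\gamma$ witnesses $k$-transitivity of $(X, \Gamma)$. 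As this holds for every $k$, the two conditions are equivalent.

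For (ii) the first task is to verify that $N$ inherits the standing hypotheses, so that Theorem \ref{thm:main} applies to $(X, N)$ just as it does to $(X, \Gamma)$. As a subspace of a second countable Hausdorff space, $N$ is again second countable and Hausdorff. It is also noncompact: writing $\Gamma = \bigcup_{i=1}^{\ell} g_i N$ as a finite union of cosets, compactness of $N$ would make each translate $g_i N$ (a homeomorphic image of $N$) compact and hence force $\Gamma$ itself to be compact, contrary to hypothesis. Granting these, (ii) follows from the chain
\begin{align*}
(X, \Gamma) \text{ thick strong mixing} &\iff (X, \Gamma) \text{ is } k\text{-transitive for all } k \\
&\iff (X, N) \text{ is } k\text{-transitive for all } k \\
&\iff (X, N) \text{ thick strong mixing},
\end{align*}
where the outer equivalences are Theorem \ref{thm:main} (for $\Gamma$ and for $N$ respectively) and the middle one is part (i).

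The one genuinely delicate point, and the step I expect to require the most care, is the local compactness of $N$ that Theorem \ref{thm:main} also demands of the acting group. Local compactness of a subgroup in the subspace topology is equivalent to its being closed in $\Gamma$, so what must be checked is that the finite-index subgroup $N$ is closed (equivalently open, since a finite-index closed subgroup has open complement). This is automatic when $N$ is an open subgroup, and is the natural reading of ``finite-index subgroup'' in a framework that only admits locally compact second countable acting groups; I would therefore either impose closedness of $N$ explicitly or argue it in the cases of interest, rather than leaving local compactness of $N$ as a silent assumption. Once local compactness is secured, everything else is a formal consequence of the cited results.
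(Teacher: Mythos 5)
Your proof is correct and takes essentially the same route the paper does: the paper offers no written argument at all, presenting the proposition as an ``immediate consequence'' of Lemma 9 of \cite{C}, of Theorem \ref{thm:main}, and of the fact that a transitivity property witnessed inside $N$ is witnessed inside $\Gamma$ --- precisely the three ingredients you combine. Your additional verification that $N$ inherits the hypotheses of Theorem \ref{thm:main} (non-compactness via the finite coset decomposition, and especially your observation that local compactness of $N$ in the subspace topology requires $N$ to be closed in $\Gamma$) addresses a point the paper leaves entirely silent, rather than diverging from its approach.
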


This shows that even though weak mixing does not necessarily pass down to finite index subgroups, the stronger property of thick strong mixing on the whole group is equivalent to being thick strong mixing on any finite index subgroup. 

\section{Summary and Concluding Remarks}

We started out by trying to find an analogue of the result in topological dynamics of the result in ergodic theory that under sufficient conditions on the system, a weak mixing transformation is strong mixing on a density one subset of $\N$. As density can sometimes be difficult to define on general groups, we replaced the notion of density one with that of thickness. We called this idea of being strong mixing on a thick subset thick strong mixing and showed that thick strong mixing was equivalent to $k$-transitivity for all $k$. This allowed us to conclude that for abelian groups, weak mixing was equivalent to thick strong mixing. In the $\Z$ case, this mirrored the result from ergodic theory that was our motivation. 

From there, we moved on to find examples to show that weak mixing was not equivalent to thick strong mixing for nonabelian groups. We then asked if there existed a larger class of groups than abelian groups for which weak mixing was equivalent to thick strong mixing. In particular, we looked at virtually abelian and nilpotent groups. While the methods we used to investigate this question were unsuccessful, they led us down a path that allowed us to conclude that weak mixing did not pass down to finite index subgroups as was the case in ergodic theory, but that thick strong mixing did. We also concluded that the property of being thick strong mixing for pairs of nonempty open sets was equivalent to the property of elasticity. 

While we have sketched a picture of the relationship between weak mixing and thick strong mixing in topological dynamics and contrasted it with the picture in ergodic theory, there is still much that is unknown. We know that there exist nonabelian group actions that are both weak mixing and thick strong mixing. We can ask then if there exists a classification of the nonabelian groups for which weak mixing implies thick strong mixing. We can also also ask what happens in the $\Z$ case if we again ask for density one instead of thickness. That is, does weak mixing imply strong mixing on a density one subset of $\Z$? There is still much to be discovered, and we have only scratched the surface of this topic.


\begin{thebibliography}{1}

\bibitem{C} G. Cairns et al. Topological transitivity and mixing notions for group actions. {\em{Rocky Mountain J. Math.}}, 37(2):371-397, 2007. 

\bibitem{G} E. Glasner. {\em {Ergodic Theory via Joinings.}} Mathematical Surveys and Monographs. American Mathematical Society, 2003. 

\bibitem{GW} E. Glasner and B. Weiss. On the interplay between measurable and topological dynamics. {\em {Handbook in Dynamical Systems}}, 1B: 597-648, 2006. 

\bibitem{P} J. Peterson. \emph{Lecture Notes on Ergodic Theory.} Lecture notes, Vanderbilt, 2011. \texttt{www.math.vanderbilt.edu/\texttildelow peters10/teaching/Spring2011/ErgodicTheoryNotes.pdf}.

\bibitem{W} P. Walters. {\em {An Introduction to Ergodic Theory.}} GTM. Springer, 2000. 

\end{thebibliography}
\end{document}